\newcommand{\jacobi}[2]{\left( \begin{array}{c} #1\\ \hline  #2 \end{array} \right)}
\newcommand{\legendre}[2]{( #1 | #2 )}
\newcommand{\sty}{\displaystyle}
\newcommand{\QED}{\begin{flushright} $\Box$ \end{flushright}}
\newtheorem{theorem}{Theorem}
\newtheorem{lemma}{Lemma}
\newtheorem{definition}{Definition}
\newtheorem{proposition}{Proposition}
\newtheorem{example}{Example}
\newtheorem{remark}{Remark}
\begin{document}

\title{Continued Fractions and Factoring} 
\author{Michele Elia -- Politecnico di Torino }


\maketitle

\begin{abstract}
\noindent
Legendre found that the continued fraction expansion of $\sqrt N$ having odd period leads directly to an explicit representation of $N$ as the sum of two squares. 
Similarly, it is shown here that the continued fraction  expansion of $\sqrt N$ having even period directly
 produces a factor of a composite $N$.
 Shanks' infrastructural method is then revisited, and some consequences of its application to factorization by means of the continued fraction expansion of $\sqrt N$ are derived.
\end{abstract}

\noindent
{\bf Mathematics Subject Classification (2010): 11A55, 11A51}

\section{Introduction}
Continued fractions have always held great fascination, for both aesthetic reasons and practical purposes. Among the many clever properties of periodic continued fractions, Legendre found how to  obtain the representation of an integer $N$ as the sum of two squares, in his own words, "{\em sans aucun t\^atonnement}"  from the continued fraction expansion of $\sqrt N$ when the period is odd \cite{legendre}. 
In particular, this property holds for any prime $p$ congruent $1$ modulo $4$, \cite{legendre,sierp}.
As a kind of counterpart to Legendre's finding, this paper shows how to obtain a factor of a composite $N$
  directly from the continued fraction expansion of $\sqrt N$ when the period is even.
In particular, this is certainly possible when both prime factors of $N$ are congruent $3$ modulo $4$.  

\noindent
Based on this result, derived from peculiar properties of continued fraction convergents, and on an adaptation of Shanks'  infrastructural machinery, a factoring algorithm is proposed whose complexity depends on the accuracy of the evaluation of certain integrals of Dirichlet's.
The paper is organized as follows. Section 2 summarizes the properties of the continued fraction expansion of $\sqrt N$.  
In Section 3, some new properties of the convergents are proved, 
and Shanks' infrastructural method is revisited and applied
to a sequence of quadratic forms generated from the convergent of the continued fraction expansion of $\sqrt N$. Section 4 discusses the factorization of composite numbers $N$
 when the period of the continued fraction expansion of $\sqrt N$ is even.
Lastly, Section 5 briefly reports some conclusions.

\section{Preliminaries}
A regular continued fraction is an expression of the form
\begin{equation}
  \label{cf}
   \sty a_0+\frac{1}{\sty a_1+\frac{1}{\sty a_2+\frac{1}{\sty a_3+ \cdots}}} ~~,
\end{equation}
 where $a_0$, $a_1$,  $a_2, \ldots, a_i, \ldots$ is a sequence, possibly infinite,
 of positive integers. 
A convergent of a continued fraction is a sequence of fractions $\frac{A_m}{B_m}$,
 each of which is obtained by truncating the continued fraction at the  $m$-th term. 
The fraction  $\frac{A_m}{B_m}$ is called the $m$-th convergent \cite{dave,hardy,perron}.
The first few initial terms of the convergent of (\ref{cf}) are 
$$   \frac{A_0}{B_0}=\frac{a_0}{1},~~  \frac{A_1}{B_1}=\frac{a_0 a_1 +1}{a_1}, ~~
     \frac{A_2}{B_2}=\frac{a_0 a_1 a_2 +a_0+a_2}{a_1a_2+1}, \ldots ~~~. $$
Numerators and denominators of the $m$-th convergent satisfy the second-order
 recurrences
\begin{equation}
   \label{mainrecur}
   \left\{  \begin{array}{ll} 
        A_m = a_m A_{m-1} + A_{m-2} &  ~~,~~  A_0=a_0,  ~~A_1=a_0 a_1 +1 \\
        B_m = a_m B_{m-1} + B_{m-2} &  ~~,~~  B_0=1,    ~~B_1 = a_1\\
        \end{array}  \right. \hspace{5mm} , ~~\mbox{$ \forall ~ m \geq 2$;}
\end{equation}
further, we have \cite[p.85]{dave} the relationships
\begin{equation}
       \label{fund}
          A_mB_{m-1}-A_{m-1}B_m=(-1)^{m-1}
    \end{equation} 
    \begin{equation}
       \label{fund1}
          A_mB_{m-2}-A_{m-2}B_m=(-1)^{m-2}a_m ~~.
    \end{equation} 
Equation (\ref{fund}) shows that  numerator and denominator of the  $m$-th convergent are relatively prime. \\
A continued fraction is said to be definitively periodic, 
 with period $\tau$, if, starting from a finite $n_o$, a fixed pattern
 $a_1'$,  $a_2', \ldots, a_{\tau}'$ repeats indefinitely.
Lagrange showed that any definitively periodic continued fraction represents
 a positive number of the form $a+b\sqrt{N}$, $a,b \in \mathbb Q$, i.e. an element of $\mathbb F=\mathbb Q(\sqrt N)$, and conversely that
 any such positive number is represented by a definitively periodic continued fraction
 \cite{dave,sierp}.  The maximal order of $\mathbb F$ is denoted $\mathfrak O_{\mathbb F}$.
Let $\mathcal G(\mathbb F/\mathbb Q)=\{\iota, \sigma\}$ be the  Galois group of $\mathbb F$ over $\mathbb Q$,
 where $\iota$ denotes the group identity, and the action of the automorphism $\sigma$, 
 called conjugation, is defined as $\sigma(a+b\sqrt{N}) =a-b\sqrt{N}$.
The field norm $\mathbf N_{\mathbb F}(\mathfrak a)$ of  $\mathfrak a \in \mathbb F$
  is defined to be $\mathbf N_{\mathbb F}(\mathfrak a)=\mathfrak a \sigma(\mathfrak a)$. \\
In the continued fraction expansion of $\sqrt{N}$, the period of length $\tau$ begins
 immediately after the first term $a_0$, and consists of a    
 palindromic part formed by $\tau-1$ terms $~~a_1,a_2, \ldots, a_2, a_1,~$
 followed by $2a_0$.
Periodic continued fractions of this sort are conventionally written in the form
  \begin{equation}
     \label{sqrtN0}
         \sqrt{N} = \left[ a_0 , \overline{a_1,a_2, \ldots, a_2, a_1, 2a_0} \right] ~~,
  \end{equation}
where the over-lined part is the period.
Note that the period of the irrational $ \frac{a_0+\sqrt{N}}{N-a_0^2}$ starts immediately without anti-period; in this case,
 the continued fraction is called purely periodic and is denoted 
$ \left[ \overline{a_1,a_2, \ldots, a_2, a_1, 2a_0} \right]$.

\noindent
Carr's book \cite[p.70-71]{carr} gives a good collection
 of properties of the continued fraction expansion of $\sqrt{N}$, which are summarized in the
 following, with the addition of some properties taken from \cite{dave,sierp,riesel}:
\begin{enumerate}
 \item Let $c_n$ and $r_n$ be the elements of two sequences of positive integers defined by
   the relation
$$  \frac{\sqrt{N}+c_n}{r_n}=a_{n+1}+\frac{r_{n+1}}{\sqrt{N}+c_{n+1}} $$
with $c_0= \left\lfloor  \sqrt N \right\rfloor$, and $ r_0 =N-a_0^2$;
 the elements of the sequence $a_1, a_2, \ldots , a_n \ldots$
   are thus obtained as the integer parts of the left-side fraction
  \begin{equation}
     \label{approxN}
 a_{n+1} = \left\lfloor \frac{\sqrt{N}+c_n}{r_n} \right\rfloor ~~. 
  \end{equation}
 \item Let $a_0= \lfloor \sqrt{N} \rfloor$ be initially computed, and set $c_0 = a_0$,  $r_0 =N-a_0^2$,
 then sequences $\{ c_n \}_{n \geq 0}$  and $\{ r_n \}_{n \geq 0}$ are produced by the 
recursions
\begin{equation}
   \label{contfrac}
    a_{m+1} = \left\lfloor \frac{a_0+c_m}{r_m} \right\rfloor \hspace{5mm},\hspace{5mm} c_{m+1}=a_{m+1} r_m -c_m 
      \hspace{5mm},\hspace{5mm} r_{m+1} = \frac{N-c_{m+1}^2}{r_m}  ~~.
\end{equation}
These recursive equations, together with (\ref{approxN}), allow us to compute the sequence
 $\{a_m\}_{m\geq 1}$ using rational arithmetical operations; however,
 the iterations may be stopped when $a_m = 2 a_0$, having completed a period.
 \item The $n$-th convergent to $\sqrt{N}$ can be recursively computed as 
  \begin{equation}
    \label{approx10}
      \frac{A_n}{B_n} = \frac{a_n A_{n-1}+ A_{n-2}}{a_n B_{n-1}+ B_{n-2}} ~~~n \geq 1~~,
  \end{equation}
  with initial conditions $A_{-1}=1$, $B_{-1}=0$, $A_{0}=a_0$, and $B_{0}=1$.
\item  The sequence of ratios $ \frac{A_n}{B_n}$ assumes the limit value $\sqrt{N}$ as $n$ goes to infinity,
  due to the inequality
$$  \left|   \frac{A_n}{B_n} -\sqrt N \right|  \leq   \frac{1}{B_n B_{n+1}}  ~~,  $$
  since $A_n$ and $B_n$ go to infinity along with $n$. Furthermore, $\frac{A_n}{B_n} <\sqrt N$, 
  if $n$ is even, and  $\frac{A_n}{B_n} >\sqrt N$ if $n$ is odd \cite[p.132]{hardy}. Therefore, any convergent
  of even index is smaller than any convergent of odd index.
\item   The true value of $\sqrt{N}$ is the value which (\ref{approx10}) becomes when the "approximated"
   quotient $a_n$, as defined in (\ref{approxN}), is substituted with the complete
   quotient $\frac{\sqrt{N}+c_{n-1}}{r_{n-1}}$. This gives
   $$ \sqrt{N}= \frac{(\sqrt{N}+c_{n-1}) A_{n-1}+r_{n-1} A_{n-2}}{(\sqrt{N}+c_{n-1})  B_{n-1}+r_{n-1} B_{n-2}}~. $$
 \item The value $c_0=a_0$ is the greatest value that $c_n$ may assume.  
           No $a_n$ or $r_n$ can be greater than $2a_0$. \\
           If $r_n=1$ then $a_{n+1}=a_0$. 
           For all $n$ greater than $0$, we have $a_0-c_n < r_n$.
 \item The first complete quotient that is repeated is $\frac{\sqrt{N} +c_0}{r_0}$, 
    and $a_1$, $r_0$, and $c_0$ commence each cycle of repeated terms.
 \item Through the first period (or cycle) of length $\tau$, the elements
    $a_{\tau-j}$, $r_{\tau-j-2}$, and $c_{\tau-j-1}$ are respectively equal to
    $a_j$, $r_j$, and $c_j$.
 \item The period length cannot be greater than $2a_0^2$.
          This bound is very loose and was tightened by Kraitchik \cite[p.95]{steuding},
           who showed that $\tau$ is upper bounded by
 \begin{equation}
   \label{boundP}
      0.72 \sqrt{N} \ln N  \hspace{6mm}  N > 7 ~~.
 \end{equation}
   However, the period length has irregular behavior as a function of $N$, because it may assume any
   value from $1$, when $N=M^2+1$, 
     to values close to the order $O( \sqrt{N} \ln N )$  \cite{sierp}.
\item The element $\mathfrak c_m=A_m+B_m \sqrt N \in \mathfrak O_{\mathbb F}$
 is associated to the $m$-th convergent. 
\end{enumerate}

\noindent
Numerators and denominators of the convergents satisfy interesting relations  \cite[p.92-95]{perron}   
  \begin{equation}
      \label{fact5}
          \left\{ \begin{array}{l}
           A_0 A_{\tau-1} + A_{\tau-2} - N B_{\tau-1} =0  \\
           A_1 A_{\tau-2} + A_0 A_{\tau-3} - N (B_1 B_{\tau-2}+B_0 B_{\tau-3}) =0  \\
           A_j A_{\tau-j-1} + A_{j-1} A_{\tau-j-2} - N (B_j B_{\tau-j-1}+B_{j-1} B_{\tau-j-2}) =0
                                \hspace{10mm}  3\leq j \leq \tau-3 ~~.\\
         \end{array}   \right.
   \end{equation} 

\noindent
Besides these properties, the following equations, \cite[p.329-332]{sierp}, are used in the proofs: 
   \begin{equation}
      \label{fact0}
          \left\{ \begin{array}{l}
           A_{\tau} = 2 a_0 A_{\tau-1} +A_{\tau-2}  \\
           B_{\tau} = 2 a_0 B_{\tau-1} +B_{\tau-2}  \\
          \end{array}   \right.
   \end{equation} 
   \begin{equation}
      \label{fact1}
          \left\{ \begin{array}{l}
           A_{\tau} B_{\tau-1} - A_{\tau-1} B_{\tau} = (-1)^{\tau-1}  \\
           A_{\tau-1} B_{\tau-2} - A_{\tau-2} B_{\tau-1}  = (-1)^{\tau-2}  \\
           A_{\tau} B_{\tau-2} - A_{\tau-2} B_{\tau}  = 2 a_0 (-1)^{\tau}\\
          \end{array}   \right.
   \end{equation} 
 \begin{equation}
      \label{fact2}
          \left\{ \begin{array}{l}
           A_{\tau-2} = - a_0 A_{\tau-1} + N B_{\tau-1}  \\
           B_{\tau-2} =  A_{\tau-1} - a_0 B_{\tau-1}  \\
          \end{array}   \right.
   \end{equation}
 \begin{equation}
      \label{fact3}
          \left\{ \begin{array}{l}
           A_{\tau} =  a_0 A_{\tau-1} + N B_{\tau-1}  \\
           B_{\tau} =  A_{\tau-1} + a_0 B_{\tau-1}  \\
          \end{array}   \right.
   \end{equation} 
\begin{remark}
  \label{remark1}
The smallest positive solution of Pell's equation $x^2-Ny^2=(\pm 1)$ is $\mathfrak c_{\tau-1}$, 
whenever a solution exists.
If $\{ 1, \sqrt N \}$ is an integral basis of $\mathbb F$, then $\mathfrak c_{\tau-1}$ coincides with
  the fundamental positive unit $\epsilon_0$ of $\mathbb F$.  If $\{ 1, \frac{1+\sqrt N}{2} \}$
   is an integral basis of $\mathbb F$, then $\mathfrak c_{\tau-1}$ may be either
 $\epsilon_0$ or $\epsilon_0^3$. 
 An easy way to check whether $\mathfrak c_{\tau-1}=\epsilon_0^3$ is
 to solve in $\mathbb Q$ the equation $(x+ y \sqrt N)^3=A_{\tau-1}+B_{\tau-1}\sqrt N$,
 which is equivalent to verifying whether some solution of the following Diophantine equation
 is a rational number with $2$ as denominator
$$   \begin{array}{l}
         64 x^9-48 A_{\tau-1}x^6+ (27 N B_{\tau-1}^2-15 A_{\tau-1}^2) x^3- A_{\tau-1}^3=0  \\
      \end{array}   ~~.
$$
If a rational solution $x_o$ of this equation exists, the corresponding $y_o$ can be computed as  
$y_o= \sqrt{\frac{x_o^2-1}{N}}$.

\end{remark} 

\noindent
The following proposition describes how to move from one period to another.

\begin{proposition}
    \label{prop1}
The sequence $\{ \mathfrak c_m \}_{m \geq 0} $ satisfies the relation
\begin{equation}
      \label{fact5per}
  \mathfrak c_{m+k\tau} = \mathfrak c_m  \mathfrak c_{\tau-1}^k  ~~\forall~m, k \in \mathbb N~~.
   \end{equation} 
\end{proposition}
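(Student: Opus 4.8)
The plan is to derive (\ref{fact5per}) in two stages: first the one-step relation $\mathfrak{c}_{m+\tau}=\mathfrak{c}_m\,\mathfrak{c}_{\tau-1}$ for every admissible $m$, and then the general case by iterating on $k$ and by using that $\mathfrak{c}_{\tau-1}$ is a unit of $\mathfrak{O}_{\mathbb{F}}$. Writing $\mathfrak{c}_m=A_m+B_m\sqrt N$, the recurrences (\ref{mainrecur}) together with the initial data $A_{-1}=1$, $B_{-1}=0$ of property~3 translate into the single recurrence $\mathfrak{c}_m=a_m\mathfrak{c}_{m-1}+\mathfrak{c}_{m-2}$, valid for all $m\geq 1$, with $\mathfrak{c}_{-1}=1$ and $\mathfrak{c}_0=a_0+\sqrt N$. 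Because the period of $\sqrt N$ begins right after $a_0$, the partial quotients obey $a_{m+\tau}=a_m$ for every $m\geq 1$ (here $a_\tau=2a_0$). Consequently the two $\mathbb{F}$-valued sequences $u_m:=\mathfrak{c}_{m+\tau}$ and $v_m:=\mathfrak{c}_m\,\mathfrak{c}_{\tau-1}$ both satisfy the same second-order linear recurrence $x_m=a_m x_{m-1}+x_{m-2}$ for $m\geq 1$: for $v_m$ this is immediate on multiplying the recurrence for $\mathfrak{c}_m$ by $\mathfrak{c}_{\tau-1}$, and for $u_m$ it follows from the recurrence for $\mathfrak{c}_{m+\tau}$ once one replaces $a_{m+\tau}$ by $a_m$.

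Since a second-order linear recurrence is pinned down by two consecutive values, it is enough to check $u_{-1}=v_{-1}$ and $u_{0}=v_{0}$. The first is trivial since $\mathfrak{c}_{-1}=1$, giving $u_{-1}=\mathfrak{c}_{\tau-1}=v_{-1}$. The second amounts to $\mathfrak{c}_{\tau}=\mathfrak{c}_{0}\,\mathfrak{c}_{\tau-1}$; expanding $\mathfrak{c}_0\,\mathfrak{c}_{\tau-1}=(a_0+\sqrt N)(A_{\tau-1}+B_{\tau-1}\sqrt N)$ and matching rational and irrational parts reproduces precisely the two identities of (\ref{fact3}), so $u_0=v_0$. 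Hence $u_m=v_m$, i.e. $\mathfrak{c}_{m+\tau}=\mathfrak{c}_m\,\mathfrak{c}_{\tau-1}$, for all $m\geq -1$. Iterating, $\mathfrak{c}_{m+k\tau}=\mathfrak{c}_{m+(k-1)\tau}\,\mathfrak{c}_{\tau-1}=\cdots=\mathfrak{c}_m\,\mathfrak{c}_{\tau-1}^{\,k}$ for every $k\geq 0$.

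To reach all $m,k\in\mathbb{Z}$, note that $\mathbf N_{\mathbb F}(\mathfrak{c}_{\tau-1})=A_{\tau-1}^2-NB_{\tau-1}^2=(-1)^\tau$ (as recorded in Remark~\ref{remark1}, or directly by combining (\ref{fact1}) with (\ref{fact3})), so $\mathfrak{c}_{\tau-1}$ is invertible in $\mathfrak{O}_{\mathbb{F}}$. One then extends the sequence to negative indices by setting $\mathfrak{c}_n:=\mathfrak{c}_{n+k\tau}\,\mathfrak{c}_{\tau-1}^{-k}$ for any $k$ with $n+k\tau\geq 0$; this is well defined by the identity just proved, and with this convention (\ref{fact5per}) holds for every $m,k\in\mathbb{Z}$.

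The main obstacle is the bookkeeping at the index $m=0$: the partial quotient $a_0$ is the anti-period and $a_\tau=2a_0\neq a_0$, so the periodicity $a_{m+\tau}=a_m$ is only available for $m\geq 1$. This is why the induction must be anchored at the two \emph{consecutive} indices $m=-1$ and $m=0$ (rather than, say, attempting a further backward step through $a_0$, which does not mesh with the periodicity), and why the essential input for the base case is exactly the pair of identities (\ref{fact3}). Everything else is routine once this is in place.
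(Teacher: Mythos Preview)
Your argument is correct and follows essentially the same route as the paper: establish the one-step relation $\mathfrak c_{m+\tau}=\mathfrak c_m\,\mathfrak c_{\tau-1}$ by using (\ref{fact3}) for the base case $m=0$ and the recurrence (\ref{mainrecur}) together with the periodicity $a_{m+\tau}=a_m$ ($m\geq 1$) for the inductive step, then iterate on $k$. Your framing via the two sequences $u_m,v_m$ satisfying the same linear recurrence with matching initial values at $m=-1,0$ is a tidy repackaging of the paper's inductive argument, and your explicit handling of negative indices via the invertibility of $\mathfrak c_{\tau-1}$ fills in a point the paper leaves implicit.
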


\begin{proof}
The two dependencies, with respect to $m$ and $k$, are disposed of separately. \\
The claimed equality is trivial for $m=k=0$, and fixing $k=1$, equation (\ref{fact3}) allows us to write
$   \mathfrak c_\tau =a_{0}\mathfrak c_{\tau-1}+\sqrt N \mathfrak c_{\tau-1} = 
      (a_0+ \sqrt N) \mathfrak c_{\tau-1}= (A_0+B_0 \sqrt N) \mathfrak c_{\tau-1}$.
Then, by the recurrences (\ref{mainrecur}) and the periodicity of the $a_i$s, we can write
$$  \mathfrak c_{\tau+1} = a_1 \mathfrak c_{\tau} +\mathfrak c_{\tau-1} =
  a_1 (A_0+B_0 \sqrt N) \mathfrak c_{\tau-1}  +   \mathfrak c_{\tau-1}  =    \mathfrak c_{1}  \mathfrak c_{\tau-1} ~~.$$
Clearly, we can iterate by using the recurrences (\ref{mainrecur}) and the symmetry of the $a_i$s to
obtain the relation
$ \mathfrak c_{\tau+m} =  \mathfrak c_{m}  \mathfrak c_{\tau-1}$,
which shows that multiplication by $ \mathfrak c_{\tau-1}$ is equivalent to a translation by $\tau$. 
The conclusion is immediate by iterating on $k$.
\end{proof}

\section{Convergents and quadratic forms }
  \label{convprop}
Let $\Delta_m=A_m^2-N B_m^2$ denote the field norm of 
 $\mathfrak c_m=A_m + \sqrt{N} B_m \in \mathfrak O_{\mathbb F}$. 
Several properties of convergents are better described considering, besides the sequence
 $\mathbf \Delta = \{\Delta_m\}_{m \geq 0}$, a second sequence
$\mathbf \Omega = \{ \Omega_m = A_m A_{m-1}-N B_m B_{m-1} \}_{m \geq 1}$.
Using (\ref{fund}), the following relation can be shown   
      \begin{equation}
         \label{dnorm}
\Omega_{m+1}^2-\Delta_m \Delta_{m+1}=N ~~\forall m \geq 0~~.
\end{equation}
The elements of the sequences $\mathbf \Delta$ and $\mathbf \Omega$ satisfy the recurrent relations
      \begin{equation}
         \label{Deltarecur}
         \left\{
           \begin{array}{ll}
             \Delta_{m+1} =  a_{m+1}^2 \Delta_{m}  + 2 a_{m+1} \Omega_{m}+  \Delta_{m-1} \\
             \Omega_{m+1} =  \Omega_{m} + a_{m+1} \Delta_{m}  \\
           \end{array}  \right.  ~~~~~~m \geq 1 
      \end{equation}
    with initial conditions $\Delta_0=a_0^2-N$, $\Delta_1=(1+a_0a_1)^2-N a_1^2$ and
    $\Omega_1=(1+a_0a_1) a_0-N a_1$. Using (\ref{Deltarecur}), it is immediate to see that
    $c_{m+1}= |\Omega_m|$ and $r_{m+1}=|\Delta_m|$. \\
 Introducing the matrix
\begin{equation}
   \label{Deltarecur2}
  T(a_{m}) = \left[ \begin{array}{ccc} 
           a_m^2 & a_m & 1 \\
           2a_m    & 1        &  0  \\
           1         & 0        &  0 
          \end{array} \right] ~~,
\end{equation}
and defining the column vector $\mathbf \Lambda_m=[\Delta_m,2\Omega_{m},\Delta_{m-1} ]^T$, 
 equations (\ref{Deltarecur}) can be written as
\begin{equation}
   \label{Deltarecur3}
  \mathbf \Lambda_{m+1}= T(a_{m+1}) \mathbf \Lambda_{m} ~~~~ \forall~ m \geq 1 ~~.
\end{equation}
Iterating  this relation, we have
\begin{equation}
   \label{recurn3}
  \mathbf \Lambda_{m+n}=T(a_{m+n}) T(a_{m+n-1})  \cdots T(a_{m+2}) T(a_{m+1}) \mathbf \Lambda_{m} =
            T_{(m,n)}  \mathbf \Lambda_{m} ~~~~ \forall~ m,n \geq 1 ~~,
\end{equation} 
where $T_{(m,n)}=\prod_{j=m+1}^{m+n}T(a_{j})$ is a matrix
 that only depends on the sequence of coefficients $a_t$.
Furthermore, from (\ref{dnorm}) we may derive the relation  
$$ \Omega_{m+1}^2-\Omega_{m}^2 = \Delta_m (\Delta_{m+1}-\Delta_{m-1}) ~~ \forall~ m \geq 1 , $$
 which allows us to write equation (\ref{Deltarecur}) as
      \begin{equation}
         \label{Deltarecur1}
         \left\{
           \begin{array}{ll}
             \Delta_{m+1} = \Delta_{m-1} + a_{m+1} ( \Omega_{m+1} +\Omega_m)   \\
             \Omega_{m+1} =  \Omega_m  + a_{m+1} \Delta_m ~\\  
           \end{array}  \right.  ~~ \forall~ m \geq 1~~.
      \end{equation}
 
\begin{definition}
     \label{seqqf}
Let $\mathbf \Upsilon$ be the sequence of quadratic forms 
$\mathbf f_m(x,y)=\Delta_m x^2+ 2 \Omega_m x y + \Delta_{m-1}y^2$, $m \geq 1$, 
defined by means of the sequences $\mathbf \Delta$ and $\mathbf \Omega$.
\end{definition}

\noindent
Note that it may sometimes be convenient to denote a quadratic form simply
 with the triple of coefficients, i.e. the $3$-dimensional vector $\mathbf \Lambda_m$; further,
due to equation (\ref{dnorm}), all quadratic forms in $\mathbf \Upsilon$ 
have the same discriminant $4N$. 

\begin{remark}
The absolute values of $\Delta_m$ and $\Omega_m$ are bounded as
  $$  |\Delta_m| <  2 \frac{1}{a_{m+1}} \sqrt{N} \leq 2 \sqrt{N} \hspace{7mm} , \hspace{7mm}
      |\Omega_m| <  \sqrt{N}  ~~~~\forall~ m\geq 1~~ .$$
The bound $2 \sqrt{N}$ for $\Delta_m$ is well known,  \cite[Theorem 171, p.140]{hardy}, 
and can be slightly tightened considering the following chain of inequalities
$$  |A_m^2-N B_m^2| = B_m^2 \left|\frac{A_m}{B_m}-\sqrt{N} \right|(\frac{A_m}{B_m}+\sqrt{N} )\leq
     \frac{B_m}{B_{m+1}} \left|\frac{A_m}{B_{m}}-\sqrt{N}+2 \sqrt{N}\right|  $$
$$  \leq \frac{B_m}{B_{m+1}} \left|\frac{A_m}{B_{m}}-\sqrt{N}\right|+
     2 \sqrt{N}\frac{B_m}{B_{m+1}} \leq \frac{1}{B_{m+1}^2} + 
     2 \frac{B_m}{a_{m+1}B_m+B_{m-1}} \sqrt{N} $$
$$  = 2 \frac{1}{a_{m+1}}\sqrt{N} + \frac{1}{B_{m+1}^2} - 
       2 \sqrt{N}\frac{B_{m-1}}{a_{m+1}(a_{m+1}B_m+B_{m-1})} < 2 \frac{1}{a_{m+1}}\sqrt{N}~~. $$
The bound for $ |\Omega_m|$ is an immediate consequence of equation (\ref{dnorm}), we have 
 $\Delta_m \Delta_{m+1} <0$ since the signs in the sequence $\mathbf \Delta$ alternate; consequently
$$  \Omega_m^2 =N+\Delta_m \Delta_{m+1}< N ~~, $$
 thus taking the positive square root of both sides, the inequality
  $|\Omega_m| <  \sqrt{N}$ is obtained.
\end{remark}

\subsection{Periodicity and Symmetry}
The sequences $\mathbf \Delta$ and $\mathbf \Omega$ are periodic in the same way as the sequence of coefficients $a_m$, 
although their periods are even, and may be $\tau$ or $2\tau$ depending on whether $\tau$ is even or odd.
 Further, within a period, there exist interesting symmetries.

\begin{theorem}[Periodicity of $\mathbf \Delta$]
    \label{per2}
Starting with $m=1$, the sequence $\mathbf \Delta = \{ \Delta_m \}_{m \geq 0}$ is periodic with period $\tau$
 or $2\tau$ depending on whether $\tau$ is even or odd. The elements of the first block  
 $\{ \Delta_m \}_{m=0}^{\tau} \subset \mathbf \Delta$ satisfy the symmetry relation
 $\Delta_m=(-1)^{\tau}\Delta_{\tau-m-2}$,  $\forall  ~0 \leq m \leq \tau-2$.
\end{theorem}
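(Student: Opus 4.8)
The plan is to treat the two assertions separately. The periodicity of $\mathbf \Delta$ will come from the multiplicativity of the norm $\mathbf N_{\mathbb F}$ applied to Proposition~\ref{prop1}, while the intra-period symmetry will come from the palindromic structure of the expansion of $\sqrt N$ already recorded in Section~2.

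For the periodicity, recall that $\Delta_m=\mathbf N_{\mathbb F}(\mathfrak c_m)$ and that $\mathbf N_{\mathbb F}$ is multiplicative; applying it to the identity $\mathfrak c_{m+k\tau}=\mathfrak c_m\mathfrak c_{\tau-1}^k$ of Proposition~\ref{prop1} gives
\[
   \Delta_{m+k\tau}=\Delta_m\,\Delta_{\tau-1}^{k}\qquad\forall~m\ge 0,\ k\in\mathbb Z ,
\]
so only the value $\Delta_{\tau-1}$ has to be pinned down. Substituting the expressions for $A_{\tau-2}$ and $B_{\tau-2}$ from (\ref{fact2}) into the second identity of (\ref{fact1}), a one-line cancellation gives $A_{\tau-1}B_{\tau-2}-A_{\tau-2}B_{\tau-1}=A_{\tau-1}^2-NB_{\tau-1}^2=\Delta_{\tau-1}$, hence $\Delta_{\tau-1}=(-1)^{\tau}$. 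Therefore $\Delta_{m+k\tau}=(-1)^{k\tau}\Delta_m$: when $\tau$ is even this is $\Delta_m$, so $\tau$ is a period of $\mathbf \Delta$; when $\tau$ is odd it reads $\Delta_{m+\tau}=-\Delta_m$, so $\Delta_{m+2\tau}=\Delta_m$ and $2\tau$ is a period, while $\tau$ itself is not since no $\Delta_m$ vanishes ($\sqrt N\notin\mathbb Q$). One could also reach periodicity without Proposition~\ref{prop1}, by noting that $|\Delta_m|=r_m$ is periodic of period $\tau$ and then fixing the sign by the alternation of $\mathbf \Delta$.

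For the symmetry, I would first fix signs: since the entries of $\mathbf \Delta$ alternate in sign and $\Delta_0=A_0^2-N<0$, one has $\Delta_m=(-1)^{m+1}|\Delta_m|$. Next, using $|\Delta_m|=r_m$ (immediate from (\ref{Deltarecur}) and the initial data), the palindromic relations ``$a_{\tau-j}=a_j$, $r_{\tau-j-2}=r_j$'' of the expansion of $\sqrt N$ (property~8 in the list of Section~2) give $|\Delta_m|=|\Delta_{\tau-m-2}|$ for $0\le m\le\tau-2$. Finally the two sign prefactors combine:
\[
   \frac{\Delta_m}{\Delta_{\tau-m-2}}=\frac{(-1)^{m+1}}{(-1)^{\tau-m-1}}=(-1)^{2m+2-\tau}=(-1)^{\tau},
\]
which is the claimed relation $\Delta_m=(-1)^{\tau}\Delta_{\tau-m-2}$.

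The routine parts are the one-line computation of $\Delta_{\tau-1}$ and the sign bookkeeping; the substantive step is the palindromic identity $|\Delta_m|=|\Delta_{\tau-m-2}|$. I expect the main obstacle to be making this step self-contained should one not want to quote property~8 as a black box: one then has to re-prove the reversal symmetry of a purely periodic expansion — in effect, relations of the type (\ref{fact2}) at every index and not just at $\tau$ — for instance by noting that reversal fixes the palindromic word $a_1,\dots,a_{\tau-1}$ and tracking its effect on the complete quotients $\frac{\sqrt N+c_m}{r_m}$, or by exploiting a transpose-like structure of the matrix products $T_{(m,n)}$ formed from (\ref{Deltarecur2}). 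A further point of care is to read the periodicity as minimal period ($\tau$, resp.\ $2\tau$), which requires the extra remark that $\{r_m\}$ has minimal period exactly $\tau$.
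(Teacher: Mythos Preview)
Your periodicity argument is the paper's: both apply the norm to $\mathfrak c_{m+k\tau}=\mathfrak c_m\mathfrak c_{\tau-1}^k$ and use $\Delta_{\tau-1}=(-1)^\tau$.

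For the symmetry the routes diverge. You read off $|\Delta_m|=|\Delta_{\tau-m-2}|$ from the palindrome $r_{\tau-j-2}=r_j$ (property~8) via $|\Delta_m|=r_m$, then fix signs by alternation. The paper instead proves the pointwise identities
\[
A_{\tau-m-2}=(-1)^{m-1}\bigl(A_{\tau-1}A_m-NB_{\tau-1}B_m\bigr),\qquad
B_{\tau-m-2}=(-1)^{m-1}\bigl(B_{\tau-1}A_m-A_{\tau-1}B_m\bigr),
\]
i.e.\ $\mathfrak c_{\tau-m-2}=(-1)^{m-1}\mathfrak c_{\tau-1}\,\sigma(\mathfrak c_m)$, by running the recurrences~(\ref{mainrecur}) backward from (\ref{fact2})--(\ref{fact3}), and then simply takes the field norm. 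Your argument is shorter and entirely correct for this statement, and it has the virtue of quoting only material already listed in Section~2. What the paper's longer route buys is the identity~(\ref{fact4}) itself: it is reused verbatim to prove the companion symmetry of $\mathbf\Omega$ in Theorem~\ref{per3} and, through the matrix $M_{\tau-1}$ it defines, to prove the factoring Theorem~\ref{locfactor}. With your approach you would handle Theorem~\ref{per3} analogously via $|\Omega_m|=c_{m+1}$ and the $c$-palindrome in property~8, but you would still need~(\ref{fact4}) later for Theorem~\ref{locfactor}.
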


\begin{proof}  The period of the sequence $\mathbf \Delta$ is $\tau$ or $2\tau$, as a consequence of
  equation (\ref{fact5per}), because the norm of $A_{\tau-1} + \sqrt{N} B_{\tau-1}$ is $(-1)^\tau$. \\
The symmetry of the sequence $\mathbf \Delta$ within the $\tau$ elements of the first period follows from
the relations
\begin{equation}
     \label{fact4}
         \left\{ \begin{array}{l}
            A_{\tau-m-2} = (-1)^{m-1} A_{\tau-1} A_m + (-1)^{m} N  B_{\tau-1} B_m \\
            B_{\tau-m-2} = (-1)^{m} A_{\tau-1} B_m + (-1)^{m-1} B_{\tau-1}  A_m \\
         \end{array}   \right. ~~,~~ 0\leq m \leq \tau-2~,
\end{equation} 
which are proved using the recurrences (\ref{mainrecur}) together with 
  (\ref{fact2}) and (\ref{fact3}) \cite[p.329-330]{sierp};
 the transformation defined by (\ref{fact4}) is identified by the matrix
\begin{equation}
   \label{keymatrix}
       M_{\tau-1} =  \left[ \begin{array}{cc}
             - A_{\tau-1}    &   N B_{\tau-1}      \\
           - B_{\tau-1}     &    ~~  A_{\tau-1}    \\
         \end{array}   \right]   ~~.
\end{equation}
We have
$$ \left\{ \begin{array}{lcl} A_{\tau-m-2}^2-N B_{\tau-m-2}^2 &=&
     (A_{\tau-1} A_m - N  B_{\tau-1} B_m)^2-N(-A_{\tau-1} B_m + B_{\tau-1}A_m)^2 \\
     &=& (A_{m}^2-N B_{m}^2)(A_{\tau-1}^2-N B_{\tau-1}^2) = (-1)^{\tau}  (A_{m}^2-N B_{m}^2)
     \end{array}   \right. $$ 
  that is $\Delta_{\tau-m-2}= (-1)^{\tau}\Delta_m $. Actually, equation (\ref{fact4})
  can be written in the form
\begin{equation}
     \label{factfund}
   A_{\tau-m-2}+\sqrt{N} B_{\tau-m-2} = (-1)^{m-1} (A_{\tau-1}+\sqrt{N} B_{\tau-1})
                                         (A_{m}-\sqrt{N} B_{m}) 
\end{equation}
or more compactly as
$\mathfrak c_{\tau-m-2} = (-1)^{m-1} \mathfrak c_{\tau-1} \sigma(\mathfrak c_m)$.
\end{proof}

\begin{theorem}[Periodicity of $\mathbf \Omega$]  
   \label{per3}
The sequence $\mathbf \Omega= \{ \Omega_m \}_{m \geq 1}$ is periodic of period $\tau$
 or $2\tau$ depending on whether $\tau$ is even or odd.  
The elements of the first block $\{ \Omega_m \}_{m=1}^{\tau} \subset \mathbf \Omega$ satisfy the symmetry relation \\
 $\Omega_{\tau-m-1}=(-1)^{\tau+1}\Omega_{m}$,  $\forall~m \leq \tau-2$.
\end{theorem}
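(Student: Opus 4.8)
The plan is to describe $\Omega_m$ through the algebraic integers $\mathfrak c_m$, exactly as $\Delta_m=\mathbf N_{\mathbb F}(\mathfrak c_m)$ was used in Theorem \ref{per2}, and then transport across this description the multiplicative identities of Proposition \ref{prop1} (for periodicity) and of equation (\ref{factfund}) (for the symmetry). The starting point is the identity
\[ \mathfrak c_m\,\sigma(\mathfrak c_{m-1}) = \Omega_m + (-1)^m\sqrt N , \qquad m\geq 1, \]
obtained by expanding $(A_m+\sqrt N B_m)(A_{m-1}-\sqrt N B_{m-1})$ and invoking (\ref{fund}) in the form $A_{m-1}B_m-A_mB_{m-1}=(-1)^m$. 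Thus $\Omega_m$ is the rational part of $\mathfrak c_m\,\sigma(\mathfrak c_{m-1})$, and any multiplicative relation among the $\mathfrak c_j$ translates at once into a relation among the $\Omega_j$.

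For the periodicity I would apply Proposition \ref{prop1} with $k=1$ to write $\mathfrak c_{m+\tau}=\mathfrak c_m\mathfrak c_{\tau-1}$ and $\mathfrak c_{m-1+\tau}=\mathfrak c_{m-1}\mathfrak c_{\tau-1}$, whence, using that $\sigma$ is an automorphism,
\[ \mathfrak c_{m+\tau}\,\sigma(\mathfrak c_{m-1+\tau}) = \mathfrak c_m\,\sigma(\mathfrak c_{m-1})\cdot \mathbf N_{\mathbb F}(\mathfrak c_{\tau-1}) = (-1)^{\tau}\,\mathfrak c_m\,\sigma(\mathfrak c_{m-1}), \]
since $\mathbf N_{\mathbb F}(\mathfrak c_{\tau-1})=\Delta_{\tau-1}=(-1)^{\tau}$ (the fact already used in the proof of Theorem \ref{per2}). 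Taking rational parts gives $\Omega_{m+\tau}=(-1)^{\tau}\Omega_m$, so the sequence $\mathbf\Omega$ has period $\tau$ when $\tau$ is even and period $2\tau$ when $\tau$ is odd (the $\Omega_m$ being nonzero, as $|\Omega_m|=c_{m+1}\geq 1$).

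For the symmetry I would use the compact form of (\ref{factfund}) from Theorem \ref{per2}, namely $\mathfrak c_{\tau-m-2}=(-1)^{m-1}\mathfrak c_{\tau-1}\sigma(\mathfrak c_m)$, valid for $0\leq m\leq\tau-2$, together with its shift (replace $m$ by $m-1$), $\mathfrak c_{\tau-m-1}=(-1)^{m}\mathfrak c_{\tau-1}\sigma(\mathfrak c_{m-1})$, valid for $1\leq m\leq\tau-1$. Applying $\sigma$ to the first and multiplying by the second,
\[ \mathfrak c_{\tau-m-1}\,\sigma(\mathfrak c_{\tau-m-2}) = (-1)^{2m-1}\,\mathbf N_{\mathbb F}(\mathfrak c_{\tau-1})\,\mathfrak c_m\,\sigma(\mathfrak c_{m-1}) = (-1)^{\tau+1}\left(\Omega_m+(-1)^m\sqrt N\right), \]
while the left-hand side equals $\Omega_{\tau-m-1}+(-1)^{\tau-m-1}\sqrt N$ by the opening identity. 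Comparing rational parts yields $\Omega_{\tau-m-1}=(-1)^{\tau+1}\Omega_m$ on the common range $1\leq m\leq\tau-2$; comparing irrational parts is an automatic consistency check, since $\tau-m-1$ and $\tau+1+m$ have the same parity.

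The argument is essentially forced once the opening identity is in place; the only point needing care is the index bookkeeping — extracting $\mathfrak c_{\tau-m-1}$ from (\ref{factfund}) with the correct sign $(-1)^m$ and validity range $1\leq m\leq\tau-1$, and then intersecting ranges to obtain $1\leq m\leq\tau-2$ — together with the attendant sign arithmetic. An alternative, more computational route would start from the scalar recursion $\Omega_{m+1}=\Omega_m+a_{m+1}\Delta_m$ of (\ref{Deltarecur1}), combined with the palindromic identity $a_j=a_{\tau-j}$ and the symmetry of $\mathbf\Delta$ from Theorem \ref{per2}, but it requires handling a telescoping sum and is messier, so I would favour the $\mathfrak c_m$ argument.
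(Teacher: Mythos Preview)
Your proof is correct and follows essentially the same approach as the paper: both identify $\Omega_m$ as the rational part of $\mathfrak c_m\,\sigma(\mathfrak c_{m-1})$ (the paper writes this as the half-trace $\Omega_j=\tfrac12\bigl(\mathfrak c_j\sigma(\mathfrak c_{j-1})+\sigma(\mathfrak c_j)\mathfrak c_{j-1}\bigr)$), then invoke Proposition~\ref{prop1} for periodicity and (\ref{factfund})/(\ref{fact4}) for the symmetry. The only cosmetic difference is that the paper expands the symmetry computation in $A$--$B$ coordinates via (\ref{fact4}), whereas you stay in the compact $\mathfrak c$-notation throughout; your version is slightly cleaner but the underlying argument is the same.
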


\begin{proof}
  The periodicity of the sequence $\mathbf \Omega$ follows from the property
   expressed by equation (\ref{fact5per}), noting that
$$  \Omega_j = \frac{1}{2} \left( (A_{j} + \sqrt{N} B_{j})((A_{j-1} - \sqrt{N} B_{j-1})+
  (A_{j} - \sqrt{N} B_{j})((A_{j-1} + \sqrt{N} B_{j-1}) \right) ~~. $$
The symmetry property of the sequence $\mathbf \Omega$ within a period follows from
 (\ref{fact4}) in the same way as does that of the sequence $\mathbf \Delta$; 
 we have
$$   \begin{array}{rl} A_{\tau-1-j}A_{(\tau-1)-j-1}-NB_{\tau-1-j}B_{(\tau-1)-j-1}= &
      - (A_{\tau-1} A_j - N  B_{\tau-1} B_j)(A_{\tau-1} A_{j-1} - N  B_{\tau-1} B_{j-1}) \\
     & ~~~~~~ + N(A_{\tau-1} B_j -  B_{\tau-1} A_j)(A_{\tau-1} B_{j-1} -  B_{\tau-1} A_{j-1}) \\
    = & - (A_{\tau-1}^2-N B_{\tau-1}^2)(A_jA_{j-1} -N B_jB_{j-1})
      \end{array}    $$
that is, $\Omega_{\tau-j-1}=(-1)^{\tau+1}\Omega_{j}$.
\end{proof}

\noindent
The two quadratic forms $ \mathbf f_{n}(x,y)=\Delta_n x^2+ 2 \Omega_n x y + \Delta_{n-1}y^2$  and
$ \mathbf f_{\tau-1-n}(x,y)=\Delta_{n-1} x^2-2 \Omega_n x y + \Delta_{n}y^2$ are associated respectively
to the positions $n$ and $\tau-1-n$, as a consequence of the symmetries of the sequences
 $\mathbf \Delta$ and $\mathbf \Omega$ shown by Theorems \ref{per2} and \ref{per3}, 
  within the first block of length $\tau$ in $\mathbf \Upsilon$. 
It should be noted that $\mathbf f_{m}(x,y)$ and $\mathbf f_{\tau-1-m}(x,y)$ are improperly equivalent.

\paragraph{Key matrix.}
Clearly, the column vectors $\mathbf \Lambda_m$ and $\mathbf \Lambda_{\tau-m-1}$ are transformed
 one into the other by an involutory matrix $J$ of determinant $1$
$$  \left[ \begin{array}{c} 
	             \Delta_{m-1} \\
	            - 2\Omega_{m} \\
	             \Delta_{m}
          \end{array} \right]  =
\left[ \begin{array}{ccc} 
           0 & 0 & 1 \\
           0 & -1 & 0  \\
           1 & 0 & 0 
          \end{array} \right] 
         \left[ \begin{array}{c} 
           \Delta_{m} \\
           2\Omega_{m} \\
           \Delta_{m-1}
          \end{array} \right]  ~~.
$$
Using the matrices $T(a_n)$ and equation (\ref{recurn3}), and
applying to  $ \mathbf  \Lambda_{m}$ the sequence of matrices
$T(a_{m+1})$, $T(a_{m+2}), \ldots $, $T(a_{\tau-1-m})$  in reverse order,
 we obtain $\mathbf  \Lambda_{\tau-1-m}$ 
\begin{equation}
  \label{key1}
 \mathbf  \Lambda_{\tau-1-m} = T(a_{\tau-1-m}) \cdots  T(a_{m+1}) \mathbf  \Lambda_{m}
  ~~ \Rightarrow ~~  \mathbf  \Lambda_{m} = J T(a_{\tau-1-m}) \cdots  T(a_{m+1}) \mathbf  \Lambda_{m}  ~~. 
\end{equation}
Assuming $\tau$ is even, this equation implies
that $\mathbf  \Lambda_m$ is an eigenvector of eigenvalue $1$ of the matrix
$$ E_m =J T(a_{\tau-1-m}) \cdots  T(a_{m+1}) = J T(a_{m+1}) T(a_{m})
  \cdots T(a_{\frac{\tau}{2}-1}) T(a_{\frac{\tau}{2}})T(a_{\frac{\tau}{2}+1}) \cdots T(a_{m+1}) $$
since  $T(a_{\tau-1-n})=T(a_{n+1})$ by the symmetry of the sequence $\{ a_n \}_{n=1}^{\tau-1}$. 
Observing that $JT(a_{m})J=T(a_{m})^{-1}$ and $J^2=I$, we have
\begin{equation}
   \label{eqforD}
\begin{array}{lcl}
    E_m &= &(J T(a_{n+2})J) (J T(a_{n+3}) J)J\cdots 
  \cdots (JT(a_{\frac{\tau}{2}-1})J)J T(a_{\frac{\tau}{2}})T(a_{\frac{\tau}{2}-1}) \cdots T(a_{n+2}) \\
        &= & T(a_{n+2})^{-1} \cdots T(a_{\frac{\tau}{2}-1})^{-1} J T(a_{\frac{\tau}{2}})T(a_{\frac{\tau}{2}-1})
             \cdots T(a_{n+2}) \\
        &= & (T(a_{\frac{\tau}{2}-1}) \cdots T(a_{n+2}))^{-1}  J T(a_{\frac{\tau}{2}})(T(a_{\frac{\tau}{2}-1})
             \cdots T(a_{n+2}) )  ~~.\\
    \end{array}
\end{equation}
It follows that the matrix $E_m$ has the same characteristic polynomial
  $z^3-z^2-z+1$ as $J T(a_{\frac{\tau}{2}})$, i.e. $E_m$ has
 eigenvalue $-1$ with multiplicity $1$, and eigenvalue $1$ with geometric multiplicity $2$.

\vspace{1mm}
\noindent
Assuming $\tau$ is odd, the symmetries of the sequences $\{ a_n \}_{n=1}^{\tau-1}$, $\{ \Delta_n \}_{n=1}^{\tau-1}$, and
 $\{ \Omega_n \}_{n=1}^{\tau-1}$, refer to an even number $\tau -1$  of terms, and equation (\ref{eqforD})
 is written as
\begin{equation}
   \label{eqforDodd}
\begin{array}{lcl}
    D_n &= &(J T(a_{n+2})J) (J T(a_{n+3}) J)J\cdots 
  \cdots (JT(a_{\frac{\tau-3}{2}})J)J T(a_{\frac{\tau-3}{2}}) \cdots T(a_{n+2}) \\
        &= & T(a_{n+2})^{-1} \cdots T(a_{\frac{\tau-3}{2}})^{-1} J T(a_{\frac{\tau-3}{2}})
             \cdots T(a_{n+2}) \\
        &= & (T(a_{\frac{\tau-3}{2}}) \cdots T(a_{n+2}))^{-1}  J (T(a_{\frac{\tau-3}{2}})
             \cdots T(a_{n+2}) ) ~~. \\
    \end{array}
\end{equation}
It follows that the matrix $D_n$ has the same characteristic polynomial
  $z^3+z^2-z-1$ of $J $, i.e. $D_n$ has
 eigenvalue $1$ with multiplicity $1$, and eigenvalue $-1$ with geometric multiplicity $2$.

\noindent
An example may clarify the method.

\begin{example}
 Consider the continued fraction expansion of $\sqrt{386}$, which has period $\tau=12$
$$   [[19], [1, 1, 1, 4, 1, 18, 1, 4, 1, 1, 1, 38]]    $$
Consider the vector $\Lambda_3=[7,-30,-23]$, since $\tau-1-3=8$ the vector $\Lambda_8$
 by symmetry is $[-23,30,7]$, i.e.  $\Lambda_8 =J \Lambda_3$. However,  $\Lambda_8$ may be obtained
by multiplying $\Lambda_3$ by a convenient sequence of matrices 
$$   T(a) = \left[ \begin{array}{ccc}
                            a^2 & a & 1  \\
                            2a  &  1  & 0  \\
                             1   &   0  & 0 
                           \end{array}  \right]
$$
$$   \Lambda_8 = T(4) T(1) T(18) T(1)T(4)\Lambda_3    $$
Since $ \Lambda_3=J \Lambda_8$, we have the equation  
$\Lambda_3 =JT(4) T(1) T(18) T(1)T(4)\Lambda_3 $,
that is
$$   \Lambda_3 =  \left[ \begin{array}{ccc}
                            9801    &1980      & 400  \\
                            -97020 & -19601 & -3960  \\
                             240100& 48510 & 9801
                           \end{array}  \right] \Lambda_3 ~~ \Rightarrow  ~~   \Lambda_3 = E_3  \Lambda_3~~,
$$
i.e. $ \Lambda_3$ is an eigenvector of $E_3$ for the eigenvalue $1$. \\
The characteristic polynomial of $E_3$ is found to be $Z^3-Z^2-Z+1=(Z+1)(Z-1)^2$ which is the same of the 
matrix $J T(a_6)$, with
$$T_{\frac{\tau}{2}}=T(18) = \left[ \begin{array}{ccc}
                            324  & 18  & 1  \\
                            36    & 1    & 0 \\
                            1      &  0    & 0
                           \end{array}  \right]  ~~~    ;  
$$ 
note that $\frac{\tau}{2}=6$, and in position $5$
we find the vector $ \Lambda_5=[2, -36, -31]$ whose first entry gives the factor $2$ of
$386$.
\end{example}

\begin{theorem}
   \label{uniquqf}
The correspondence $m \leftrightarrow \mathbf \Lambda_m$ is  one-to-one for $1 \leq m \leq \tau$,
 i.e. all quadratic forms  $\mathbf f_m(x,y)$ within a period are distinct.
\end{theorem}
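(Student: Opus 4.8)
The plan is to argue by contradiction and reduce the statement to the minimality of the period $\tau$ of the continued fraction expansion of $\sqrt{N}$: within a single period no complete quotient recurs.

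First I would suppose that $\mathbf{\Lambda}_m = \mathbf{\Lambda}_n$ for some pair of indices $1 \le m < n \le \tau$. Since the entries of $\mathbf{\Lambda}_k$ are $\Delta_k$, $2\Omega_k$ and $\Delta_{k-1}$, comparing the first two entries and passing to absolute values yields $|\Delta_m| = |\Delta_n|$ and $|\Omega_m| = |\Omega_n|$. Recalling from Section 3 the identities $r_{m+1} = |\Delta_m|$ and $c_{m+1} = |\Omega_m|$, this gives $r_{m+1} = r_{n+1}$ and $c_{m+1} = c_{n+1}$, so the two complete quotients $\frac{\sqrt{N} + c_{m+1}}{r_{m+1}}$ and $\frac{\sqrt{N} + c_{n+1}}{r_{n+1}}$ of the expansion of $\sqrt{N}$ coincide.

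Next I would invoke the fact (recalled among the properties in Section 2: the first complete quotient to recur is $\frac{\sqrt{N} + c_0}{r_0}$, and $a_1, r_0, c_0$ commence each cycle) that a complete quotient determines the whole tail of the expansion, so that two complete quotients are equal precisely when their positions differ by a multiple of $\tau$; in particular the complete quotients occurring in one period are pairwise distinct. Hence $m+1$ and $n+1$ are congruent modulo $\tau$, i.e. $\tau \mid n - m$. But $1 \le m < n \le \tau$ forces $0 < n - m \le \tau - 1 < \tau$, a contradiction. Therefore $\mathbf{\Lambda}_1, \ldots, \mathbf{\Lambda}_{\tau}$ are pairwise distinct, and since $\mathbf{\Lambda}_m$ is exactly the coefficient data of $\mathbf{f}_m$, all the quadratic forms $\mathbf{f}_1, \ldots, \mathbf{f}_{\tau}$ in a period are distinct.

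The only point that needs care is the index bookkeeping: because $m$ and $n$ are allowed to reach $\tau$, the complete quotients involved carry indices up to $\tau + 1$, so one must use the periodicity $\frac{\sqrt{N} + c_{j+\tau}}{r_{j+\tau}} = \frac{\sqrt{N} + c_j}{r_j}$ to reduce them into a single cycle before reading off $\tau \mid n - m$. I do not expect to need the matrix machinery or the eigenvalue computations of this section; the distinctness of the forms within a period rests solely on the non-repetition of complete quotients over a period, everything else being a direct translation through the identities $r_{m+1}=|\Delta_m|$, $c_{m+1}=|\Omega_m|$.
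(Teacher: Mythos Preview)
Your argument is correct and takes a genuinely different route from the paper's. The paper argues by contradiction via the matrix recursion $\mathbf\Lambda_{m+1}=T(a_{m+1})\mathbf\Lambda_m$: if $\mathbf\Lambda_{n_1}=\mathbf\Lambda_{n_2}=\mathbf X$ then $\mathbf X$ would be an eigenvector of the product $P_{n_2n_1}=\prod_{j=n_1+1}^{n_2}T(a_j)$ for the eigenvalue $1$; the author then shows that $P_{n_2n_1}$ has a reciprocal characteristic polynomial and, splitting on the parity of $n_2-n_1$, derives that either the eigenvalue $1$ does not occur, or both $\mathbf X$ and $J\mathbf X$ would be eigenvectors for a simple eigenvalue $1$, which is impossible. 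Your proof bypasses all of this by translating $\mathbf\Lambda_m=\mathbf\Lambda_n$ through the identities $r_{k}=|\Delta_{k-1}|$, $c_{k}=|\Omega_{k-1}|$ (stated just after~(\ref{Deltarecur})) to obtain equality of two complete quotients, and then invokes the classical fact (item~7 in Section~2) that complete quotients do not repeat within one period. This is strictly more elementary: it uses nothing beyond the minimality of $\tau$ for the purely periodic expansion, whereas the paper's argument exercises the $3\times 3$ matrix framework and the involution $J$. The paper's approach, on the other hand, is self-contained within the $\mathbf\Lambda$/$T(a)$ formalism and does not appeal back to the $(c_n,r_n)$ recursion. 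One cosmetic remark: you pass to absolute values, but since $\mathbf\Lambda_m=\mathbf\Lambda_n$ already gives $\Delta_m=\Delta_n$ and $\Omega_m=\Omega_n$ on the nose, that step is unnecessary (though harmless).
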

 
\begin{proof} 
The proof is by contradiction.
Suppose, contrary to the theorem's claim, that
 $\mathbf \Lambda_{n_1}=\mathbf \Lambda_{n_2}=\mathbf X$ for some $n_1 < n_2$, then 
 equation (\ref{recurn3}) implies the existence of a matrix $P_{n_2n_1}= \prod_{j=n_1+1}^{n_2}T(a_j)$
   such that $\mathbf \Lambda_{n_2}=P_{n_2n_1}\mathbf \Lambda_{n_1}$. Thus $\mathbf X$ must be
 an eigenvector, for the eigenvalue $1$, of the non-negative (positive whenever $n_2-n_1 \geq 2$)
  matrix $P_{n_2n_1}$ which is the product of non-negative matrices. \\
If $n_2 =n_1+1$, it is direct to compute the characteristic polynomial $p(x)$ of  $P_{n_2n_1}=T(a_{n_1})$
$$  p(x) = x^3-(a_{n_1}^2+1) x^2-(a_{n_1}^2+1) x+1 ~~,  $$
which is a $3$-degree reciprocal polynomial which has a single root $-1$, and the remaining roots
are certainly different from $1$, because $a_{n_1} \neq 0$; thus, in this case, $\mathbf X$ cannot exist.\\
To prove  in general that $\mathbf X$ does not exist, we observe that any $P_{n_2n_1}$ has a reciprocal
 characteristic polynomial $q(x)$ of degree $3$, because we have
$$  q(x)= \det\left(\lambda I_3 -\prod_{j=n_1+1}^{n_2}T(a_j) \right) =  \det\left(\lambda I_3 -J\prod_{j=n_1+1}^{n_2}T(a_j)J \right)= \det\left (\lambda I_3 -\prod_{j=n_1+1}^{n_2}T(a_j)^{-1} \right)  ~~,$$
$$  q(x)= \det\left(\lambda I_3 -\prod_{j=n_1+1}^{n_2}T(a_j) \right) = \det\left(\lambda I_3 -\left(\prod_{j=n_1+1}^{n_2}T(a_j) \right)^{-1} \right)  ~~,$$
where the last equality is justified by \cite[Theorem 1.3.20, p.53]{horn}.
 The reciprocal polynomial  $q(x)$ has an eigenvalue equal to either $-1$ or $1$. If the eigenvalue is $-1$, which occurs when $n_2-n_1$ is odd, 
 the eigenvector $\mathbf X$ does not exist.
If the eigenvalue is $1$, which occurs when $n_2-n_1$ is even, there is a second eigenvector for the same eigenvalue, because we have
$$ J\mathbf \Lambda_{n_2}=JP_{n_2n_1}\mathbf \Lambda_{n_1} = JP_{n_2n_1} J \cdot J \mathbf \Lambda_{n_1} = \left(\prod_{j=n_1+1}^{n_2}T(a_j)\right)^{-1}J \mathbf \Lambda_{n_1}
    ~~\Rightarrow~~ \left( \prod_{j=n_1+1}^{n_2}T(a_j) \right) J\mathbf \Lambda_{n_2}= J\mathbf \Lambda_{n_2}. $$ 
Then, $\mathbf X$ and $J\mathbf X$ should be distinct eigenvectors (because $\Omega_{n_2} \neq 0$ for every $n_2$) of the same eigenvalue $1$ of multiplicity one, which is impossible. \\
In conclusion,  the eigenvector $\mathbf X$ of eigenvalue $1$ does not exist, so  
$m \leftrightarrow \mathbf \Lambda_m^T$ is a one-to-one mapping within each period.
\end {proof}

\subsection{Odd period}
In \cite[p.59-60]{legendre}, Legendre describes a constructive method for computing the representation
 of a positive (square-free) $N$ as the sum of two squares, by means of the continued fraction expansion of $\sqrt N$.
This result is stated as a theorem with a different proof from that of Legendre \cite[p.60]{legendre}.

\begin{theorem}
    \label{serret}
Let $N$ be a positive integer such that the continued fraction expansion of $\sqrt N$ has odd period $\tau$.
The representation of $N=x^2+y^2$ is given by $x= \Delta_{\frac{\tau-1}{2}}$ and
  $y=\Omega_{\frac{\tau-1}{2}}$.
\end{theorem}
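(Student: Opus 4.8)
The plan is to read off the representation of $N$ directly from the midpoint of the period, using only the norm identity (\ref{dnorm}) together with the symmetry of the sequence $\mathbf\Delta$ established in Theorem~\ref{per2}. Since $\tau$ is odd, $k:=\frac{\tau-1}{2}$ is a non-negative integer, and what must be shown is $N=\Delta_k^2+\Omega_k^2$.

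First I would specialize (\ref{dnorm}) at the index $m=k-1$, which is admissible for $\tau\ge 3$, obtaining
\[
\Omega_k^2-\Delta_{k-1}\Delta_k=N .
\]
The boundary case $\tau=1$, where $N=a_0^2+1$, is checked by hand: with the natural conventions $A_{-1}=1$, $B_{-1}=0$ one has $\Delta_0=a_0^2-N=-1$ and $\Omega_0=A_0A_{-1}-NB_0B_{-1}=a_0$, so $\Delta_0^2+\Omega_0^2=1+a_0^2=N$. Thus the theorem reduces to the single identity $\Delta_{k-1}=-\Delta_k$, for then $-\Delta_{k-1}\Delta_k=\Delta_k^2$ and the display above becomes $N=\Omega_k^2+\Delta_k^2$.

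The identity $\Delta_{k-1}=-\Delta_k$ is exactly Theorem~\ref{per2} read at the right spot: for odd $\tau$ the symmetry relation is $\Delta_m=-\Delta_{\tau-m-2}$ on $0\le m\le\tau-2$, and putting $m=k-1=\frac{\tau-3}{2}$ gives $\tau-m-2=\frac{\tau-1}{2}=k$, hence $\Delta_{k-1}=-\Delta_k$. (Equivalently: the symmetry forces $|\Delta_{k-1}|=|\Delta_k|$, while consecutive terms of $\mathbf\Delta$ have opposite signs, so they must be negatives of one another.) It is also worth noting that $\Delta_k$ and $\Omega_k$ are genuine integers, being polynomial expressions in the integers $A_i$, $B_i$, $N$; indeed $\Delta_k=\mathbf N_{\mathbb F}(\mathfrak c_k)$ and $2\Omega_k=\mathrm{Tr}_{\mathbb F/\mathbb Q}\!\left(\mathfrak c_k\,\sigma(\mathfrak c_{k-1})\right)$.

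I do not expect a genuine obstacle here: once the indices are aligned the argument is a one-line substitution into (\ref{dnorm}). The only points that require care are checking that the index $k-1$ lies in the admissible range $0\le m\le\tau-2$ of Theorem~\ref{per2} (true for every $\tau\ge 3$) and disposing of the degenerate value $\tau=1$.
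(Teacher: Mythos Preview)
Your proposal is correct and follows essentially the same route as the paper: both arguments combine the anti-symmetry $\Delta_{k-1}=-\Delta_k$ from Theorem~\ref{per2} (for odd $\tau$) with the identity $\Omega_k^2-\Delta_{k-1}\Delta_k=N$, which the paper phrases as the discriminant of $\mathbf f_k$ being $4N$ and you phrase as the specialization of (\ref{dnorm}) at $m=k-1$. Your version is in fact slightly more careful, since you verify the admissible range for the index and dispose of the degenerate case $\tau=1$ (where $\Omega_0$ must be interpreted via $A_{-1}=1$, $B_{-1}=0$), which the paper leaves implicit.
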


\noindent
{\sc Proof}. 
Since $\tau$ is odd, by the  anti-symmetry in the sequence $\{ \Delta_n \}_{n=0}^{\tau-2}$,  we have
  $\Delta_{\frac{\tau-1}{2}-1}=-\Delta_{\frac{\tau-1}{2}}$,
 so that the quadratic form $\Delta_{\frac{\tau-1}{2}} X^2+2\Omega_{\frac{\tau-1}{2}} XY+
 \Delta_{\frac{\tau-3}{2}}Y^2$ has discriminant
 $4 \Delta_{\frac{\tau-1}{2}}^2+4\Omega_{\frac{\tau-1}{2}}^2=4N$, which shows the assertion.
\QED

\subsection{Even period}
    \label{subsect2}
Let $N$ be a square-free composite integer such that the continued fraction of $\sqrt{N}$ has even period. 
We say that $\mathfrak c_{\tau-1}=A_{\tau-1}+B_{\tau-1}\sqrt{N} $ splits $N$ whenever $A_{\tau-1}+1$ and $A_{\tau-1}-1$ are divisible
 by proper factors, say $m_1$ and $m_2$, of $N=m_1m_2$, respectively.  

\begin{lemma}
    \label{sym3a}
If the period $\tau$ of the continued fraction expansion of $\sqrt{N}$ is even, we have
$$  \Delta_{\tau} = \Delta_{\tau-2}  ~~~~\mbox{and}~~~~  \Omega_{\tau}=- \Omega_{\tau-1} $$
  with $\Omega_{\tau-1}=-a_0$.
\end{lemma}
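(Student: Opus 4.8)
The plan is to reduce the whole statement to a single fact: since $\tau$ is even, the field norm $\Delta_{\tau-1}=A_{\tau-1}^2-NB_{\tau-1}^2$ of $\mathfrak c_{\tau-1}$ equals $(-1)^\tau=1$. This is already noted in the proof of Theorem~\ref{per2}, but if one wants the argument self-contained it falls out of substituting the closed forms (\ref{fact2}) for $A_{\tau-2},B_{\tau-2}$ into the second identity of (\ref{fact1}), namely $A_{\tau-1}B_{\tau-2}-A_{\tau-2}B_{\tau-1}=(-1)^{\tau-2}$: the mixed term $a_0A_{\tau-1}B_{\tau-1}$ cancels and the left side collapses to $A_{\tau-1}^2-NB_{\tau-1}^2$, hence $\Delta_{\tau-1}=(-1)^\tau=1$.

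For the $\mathbf\Delta$-part I would substitute $A_\tau=a_0A_{\tau-1}+NB_{\tau-1}$ and $B_\tau=A_{\tau-1}+a_0B_{\tau-1}$ from (\ref{fact3}) into $\Delta_\tau=A_\tau^2-NB_\tau^2$; the cross terms $2a_0NA_{\tau-1}B_{\tau-1}$ cancel and what remains factors as $(a_0^2-N)(A_{\tau-1}^2-NB_{\tau-1}^2)=\Delta_0\Delta_{\tau-1}=\Delta_0$. Performing the identical expansion with $A_{\tau-2},B_{\tau-2}$ from (\ref{fact2}) in $\Delta_{\tau-2}=A_{\tau-2}^2-NB_{\tau-2}^2$ again yields $(a_0^2-N)(A_{\tau-1}^2-NB_{\tau-1}^2)=\Delta_0$, so $\Delta_\tau=\Delta_{\tau-2}$. (Alternatively, $\Delta_{\tau-2}=\Delta_0$ is simply the $m=0$ instance of the symmetry $\Delta_m=(-1)^\tau\Delta_{\tau-m-2}$ of Theorem~\ref{per2}.)

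For the $\mathbf\Omega$-part the same two substitutions finish the job. Plugging (\ref{fact3}) into $\Omega_\tau=A_\tau A_{\tau-1}-NB_\tau B_{\tau-1}$, the mixed terms $NA_{\tau-1}B_{\tau-1}$ cancel and one is left with $a_0(A_{\tau-1}^2-NB_{\tau-1}^2)=a_0\Delta_{\tau-1}=a_0$; plugging (\ref{fact2}) into $\Omega_{\tau-1}=A_{\tau-1}A_{\tau-2}-NB_{\tau-1}B_{\tau-2}$, the analogous cancellation leaves $-a_0(A_{\tau-1}^2-NB_{\tau-1}^2)=-a_0\Delta_{\tau-1}=-a_0$. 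Hence $\Omega_{\tau-1}=-a_0$ and $\Omega_\tau=a_0=-\Omega_{\tau-1}$. As a consistency check, the recurrence (\ref{Deltarecur}) with $m=\tau-1$ gives $\Omega_\tau=\Omega_{\tau-1}+a_\tau\Delta_{\tau-1}=-a_0+2a_0=a_0$, using $a_\tau=2a_0$ and $\Delta_{\tau-1}=1$.

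There is no genuine obstacle here: the proof is a short chain of substitutions and cancellations from the already-established identities (\ref{fact1})--(\ref{fact3}). The only delicate points are invoking the even-period hypothesis exactly once, at the step $\Delta_{\tau-1}=(-1)^\tau=1$, and keeping the signs straight in the two quadratic expansions, since a sign slip there would flip the sign of $\Omega_{\tau-1}$.
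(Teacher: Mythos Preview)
Your proof is correct, and it takes a genuinely different route from the paper's. The paper first derives $\Omega_{\tau-1}=-a_0$ from the discriminant identity (\ref{dnorm}), writing $\Omega_{\tau-1}^2=N+\Delta_{\tau-2}$ and then appealing to a Taylor expansion of $\sqrt{N+\Delta_{\tau-2}}$ around $\sqrt N$ to identify the value with $-\lfloor\sqrt N\rfloor$; with $\Omega_{\tau-1}=-a_0$ in hand, it then feeds $a_\tau=2a_0$ and $\Delta_{\tau-1}=1$ into the recurrences (\ref{Deltarecur}) and (\ref{Deltarecur1}) to obtain $\Delta_\tau=\Delta_{\tau-2}$ and $\Omega_\tau=-\Omega_{\tau-1}$. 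You instead bypass the recurrences entirely and compute all four quantities $\Delta_\tau,\Delta_{\tau-2},\Omega_\tau,\Omega_{\tau-1}$ directly from the closed forms (\ref{fact2})--(\ref{fact3}), each time obtaining $\pm a_0\Delta_{\tau-1}$ or $\Delta_0\Delta_{\tau-1}$. Your approach is cleaner and in fact more rigorous: the paper's Taylor-series step is only heuristic (a truncated expansion does not by itself pin down the integer value $-a_0$; what really forces it is the identity $\Delta_{\tau-2}=\Delta_0=a_0^2-N$, which your computation supplies but the paper's argument leaves implicit). The paper's route, on the other hand, keeps the recurrences (\ref{Deltarecur}), (\ref{Deltarecur1}) in the foreground, which connects more visibly to the surrounding infrastructure on $\mathbf\Delta$ and $\mathbf\Omega$.
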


\begin{proof}
Since $\Delta_{\tau-1}=1$, we have $\Omega_{\tau-1}^2-\Delta_{\tau-2}=N$, thus
$\Omega_{\tau-1}=-\sqrt{N+\Delta_{\tau-2}}$ ~because $\tau-1$ is odd. Considering
the Taylor series around the origin for the square root, we have
$$   \Omega_{\tau-1}=-\sqrt{N+\Delta_{\tau-2}} = -\sqrt N \left( 1 - \frac{\Delta_{\tau-2}}{2N} + \frac{\Delta_{\tau-2}^2}{8N^2} + \cdots  \right)
     =-\left\lfloor \sqrt{N} \right\rfloor =-a_0 ~~.  $$
Using equation (\ref{Deltarecur}) with $m=\tau-1$ we have
$$ \Delta_{\tau} = \Delta_{\tau-2} +a_{\tau} \left( 2 \Omega_{\tau-1} + a_{\tau}\Delta_{\tau-1} \right) =
        \Delta_{\tau-2}  ~~. $$
Thus, equation (\ref{Deltarecur1})  finally gives
$  \Omega_{\tau}=- \Omega_{\tau-1}$.
\end{proof}

\begin{lemma}
    \label{sym3}
Let $\tau$ be even, and define the integer $\gamma \in \mathfrak O_{\mathbb F}$ by the product
$$  \gamma= \prod_{m=0}^{\tau-1} \left(\sqrt N +(-1)^{m} \Omega_m \right) ~~,$$
 then  $ \frac{\gamma}{\sigma(\gamma)}=\left(A_{\tau-1}+B_{\tau-1}\sqrt{N}\right)^2 =\mathfrak c_{\tau-1}^2 $. 
\end{lemma}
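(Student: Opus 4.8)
The plan is to rewrite each factor of $\gamma/\sigma(\gamma)$ in terms of the ratios $h_m:=\mathfrak c_m/\sigma(\mathfrak c_m)$, where $\mathfrak c_m=A_m+B_m\sqrt N$, so that the product telescopes, and then to feed in Proposition~\ref{prop1}. \emph{Step 1 (a factor identity).} Expanding $\mathfrak c_m\,\sigma(\mathfrak c_{m-1})=(A_m+B_m\sqrt N)(A_{m-1}-B_{m-1}\sqrt N)$, inserting the definition $\Omega_m=A_mA_{m-1}-NB_mB_{m-1}$, and using~(\ref{fund}) in the form $A_{m-1}B_m-A_mB_{m-1}=(-1)^m$, one gets $\mathfrak c_m\,\sigma(\mathfrak c_{m-1})=\Omega_m+(-1)^m\sqrt N$; applying $\sigma$ gives $\sigma(\mathfrak c_m)\,\mathfrak c_{m-1}=\Omega_m-(-1)^m\sqrt N$. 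Since $(-1)^m$ squares to $1$, these may be read as
$$\sqrt N+(-1)^m\Omega_m=(-1)^m\,\mathfrak c_m\,\sigma(\mathfrak c_{m-1}),\qquad \sqrt N-(-1)^m\Omega_m=-(-1)^m\,\sigma(\mathfrak c_m)\,\mathfrak c_{m-1}.$$

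\emph{Step 2 (telescope).} Because $\tau$ is even, $\sigma(\gamma)=\prod_{m=1}^{\tau}\bigl(\sqrt N-(-1)^m\Omega_m\bigr)$ (the overall $(-1)^\tau$ produced by conjugating each factor equals $1$), so by Step~1,
$$\frac{\gamma}{\sigma(\gamma)}=\prod_{m=1}^{\tau}\frac{\sqrt N+(-1)^m\Omega_m}{\sqrt N-(-1)^m\Omega_m}=\prod_{m=1}^{\tau}\left(-\frac{h_m}{h_{m-1}}\right)=(-1)^{\tau}\,\frac{h_{\tau}}{h_{0}}=\frac{h_{\tau}}{h_{0}},$$
using $\dfrac{\mathfrak c_m\,\sigma(\mathfrak c_{m-1})}{\sigma(\mathfrak c_m)\,\mathfrak c_{m-1}}=\dfrac{h_m}{h_{m-1}}$, the telescoping of the product, and $(-1)^{\tau}=1$.

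\emph{Step 3 (conclude).} By Proposition~\ref{prop1}, equation~(\ref{fact5per}) with $m=0$, $k=1$ — note $\mathfrak c_0=a_0+\sqrt N$ — we have $\mathfrak c_\tau=\mathfrak c_0\,\mathfrak c_{\tau-1}$, hence $\sigma(\mathfrak c_\tau)=\sigma(\mathfrak c_0)\,\sigma(\mathfrak c_{\tau-1})$, so $h_\tau=h_0\,h_{\tau-1}$ and therefore $\gamma/\sigma(\gamma)=h_{\tau-1}=\mathfrak c_{\tau-1}/\sigma(\mathfrak c_{\tau-1})$. Finally $\mathbf N_{\mathbb F}(\mathfrak c_{\tau-1})=\Delta_{\tau-1}=(-1)^{\tau}=1$, i.e. $\sigma(\mathfrak c_{\tau-1})=\mathfrak c_{\tau-1}^{-1}$, which identifies the quotient as $A_{\tau-1}+B_{\tau-1}\sqrt N$ (more precisely, it comes out as $\mathfrak c_{\tau-1}^{2}$: by the symmetries $\Omega_{\tau-1-m}=-\Omega_m$ of Theorem~\ref{per3} and $\Omega_\tau=-\Omega_{\tau-1}$ of Lemma~\ref{sym3a}, the $m$-th factor of $\gamma$ equals the $(\tau-1-m)$-th, and the $\tau$-th equals the $(\tau-1)$-th, so $\gamma$ is a perfect square in $\mathfrak O_{\mathbb F}$ and $\sqrt\gamma/\sigma(\sqrt\gamma)=\pm\mathfrak c_{\tau-1}$). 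The one delicate point is the honest bookkeeping of the signs $(-1)^m$ through the $\tau$-fold product in Step~2, which cancels precisely because $\tau$ is even; everything else is routine algebra in $\mathbb F=\mathbb Q(\sqrt N)$ together with the single appeal to Proposition~\ref{prop1}.
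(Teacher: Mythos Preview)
Your telescoping argument is correct and is essentially the same device the paper uses: both rest on the identity $\sqrt N+(-1)^m\Omega_m=(-1)^m\,\mathfrak c_m\,\sigma(\mathfrak c_{m-1})$ and then collapse a product. The paper divides $\gamma$ by the rational number $D=\prod_{m}(-1)^m\Delta_m$ and telescopes $\prod_m \sigma(\mathfrak c_{m-1})/\sigma(\mathfrak c_m)$ to reach its equation~(\ref{unitnorm}), namely $\gamma/D=\mathfrak c_{\tau-1}$; you instead telescope $\prod_m h_m/h_{m-1}$ directly and obtain $\gamma/\sigma(\gamma)=h_{\tau-1}=\mathfrak c_{\tau-1}/\sigma(\mathfrak c_{\tau-1})=\mathfrak c_{\tau-1}^{\,2}$.

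These two conclusions are consistent with each other (from $\gamma=D\,\mathfrak c_{\tau-1}$ one gets $\sigma(\gamma)=D\,\sigma(\mathfrak c_{\tau-1})=D/\mathfrak c_{\tau-1}$, hence $\gamma/\sigma(\gamma)=\mathfrak c_{\tau-1}^{\,2}$), and both disagree with the lemma \emph{as literally stated}: the quotient $\gamma/\sigma(\gamma)$ equals $\mathfrak c_{\tau-1}^{\,2}$, not $\mathfrak c_{\tau-1}$. Your parenthetical ``more precisely, it comes out as $\mathfrak c_{\tau-1}^{\,2}$'' is therefore the honest endpoint of the computation; the preceding clause ``which identifies the quotient as $A_{\tau-1}+B_{\tau-1}\sqrt N$'' is not, and the square-root remark does not rescue the literal statement either (it only yields $\sqrt\gamma/\sigma(\sqrt\gamma)=\pm\mathfrak c_{\tau-1}$, a different assertion). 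For the paper's purposes this discrepancy is harmless: the only later use of the lemma is in Theorem~\ref{mainreg}, where what is needed is $\tfrac12\ln\bigl(\gamma/\sigma(\gamma)\bigr)=\ln\mathfrak c_{\tau-1}$, and that is exactly what your computation (and the paper's equation~(\ref{unitnorm})) delivers.
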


\begin{proof}
The norm of  $\frac{\gamma}{\sigma(\gamma)}$ is patently $1$, 
thus it remains to prove that $ \frac{\gamma}{\sigma(\gamma)}$  lies in  $\mathfrak O_{\mathbb F}$. 
We have
$$   \frac{\gamma}{\sigma(\gamma)} =  \prod_{m=0}^{\tau-1} \frac{\sqrt N +(-1)^{m} \Omega_m}{-\sqrt N +(-1)^{m} \Omega_m} =  \prod_{m=0}^{\tau-1} \frac{(\sqrt N +(-1)^{m} \Omega_m)^2}{ \Omega_m^2-N} =  \prod_{m=0}^{\tau-1} \frac{(\sqrt N +(-1)^{m} \Omega_m)^2}{ \Delta_m \Delta_{m-1}}~~.$$
Observing that $\prod_{m=0}^{\tau-1} (\Delta_m \Delta_{m-1}) = \prod_{m=0}^{\tau-1} \Delta_m^2$ by the periodicity of the sequence 
$\{ \Delta_m \}_{m}$, it follows that $\frac{\gamma}{\sigma(\gamma)}$ is a perfect square. 
Considering the following identity
$$  \frac{\sqrt N +(-1)^{m}\Omega_m}{ \Delta_m}=(-1)^m \frac{A_{m-1}-B_{m-1} \sqrt N}{A_m-B_m \sqrt N} ~,$$
we have that the base of the square giving $\frac{\gamma}{\sigma(\gamma)}$ is
$$ \prod_{m=0}^{\tau-1} \frac{(\sqrt N +(-1)^{m} \Omega_m)}{ \Delta_m}= \prod_{m=0}^{\tau-1} (-1)^m  \frac{A_{m-1}-B_{m-1} \sqrt N}{A_m-B_m \sqrt N}  = (-1)^{\frac{\tau}{2}}
  \frac{A_{-1}-B_{-1} \sqrt N}{A_{\tau-1}-B_{\tau-1} \sqrt N}  ~~.  ~~.$$
Now, $A_{-1}=1$ and $B_{-1}=0$ by definition, thus
\begin{equation}
   \label{unitnorm}
  \prod_{m=0}^{\tau-1} \frac{(\sqrt N +(-1)^{m} \Omega_m)}{ \Delta_m}=  (-1)^{\frac{\tau}{2}}
  (A_{\tau-1}+B_{\tau-1} \sqrt N) = (-1)^{\frac{\tau}{2}} \frak c_{\tau-1} ~~, 
\end{equation}
and in conclusion $\frac{\gamma}{\sigma(\gamma)}= \frak c_{\tau-1}^2$, which shows the claimed property.
\end{proof}

\noindent
The close connection between the continued fraction expansion of $\sqrt{N}$ and the factorization
 of $N$ is proved using the matrix $M_{\tau-1}$ defined in equation (\ref{keymatrix}).
Note that the matrix $M_{\tau-1}$ is involutory, or neg-involutory, since its square 
is either plus or minus the identity matrix $I_2$, i.e. $M_{\tau-1}^2=(-1)^{\tau} I_2$. 
 If $\tau$ is even, the eigenvalues of matrix $M_{\tau-1}$ are $\pm 1$, and $M_{\tau-1}$
 is involutory. 
 If $\tau$ is odd, the eigenvalues are $\pm i$, and $M_{\tau-1}$ is neg-involutory.

\begin{theorem}
   \label{locfactor}
If the period $\tau$ of the continued fraction expansion of $\sqrt{N}$ is even, the element
 $\mathfrak c_{\tau-1}$ in $\mathbb Q(\sqrt N)$ splits $2N$, and a factor of $2N$ is located at 
 positions $\frac{\tau-2}{2}+j\tau$, $j=0,1, \ldots$, in the sequence
 $\mathbf \Delta=\{\mathfrak c_{m} \sigma(\mathfrak c_{m}) \}_{m \geq 1}$.
\end{theorem}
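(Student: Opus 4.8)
The plan is to show that, when $\tau$ is even, the middle of the period carries an \emph{ambiguous} quadratic form whose leading coefficient $\Delta_{n}$, with $n=\frac{\tau-2}{2}=\frac{\tau}{2}-1$, already divides $4N$; here $n$ is the unique index fixed by $m\mapsto\tau-m-2$ and the ``anti-fixed point'' of $m\mapsto\tau-m-1$. I would begin from the Pell identity available because $\tau$ is even: the norm of $\mathfrak c_{\tau-1}$ is $(-1)^{\tau}=1$, so $\Delta_{\tau-1}=A_{\tau-1}^{2}-NB_{\tau-1}^{2}=1$ and hence $(A_{\tau-1}-1)(A_{\tau-1}+1)=NB_{\tau-1}^{2}$. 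This is the relation out of which the splitting of $4N$ will be read once both sides are expressed through the data at position $n$.

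Next I would produce the key identity at $n$. Theorem~\ref{per3} gives $\Omega_{n+1}=\Omega_{\tau-n-1}=(-1)^{\tau+1}\Omega_{n}=-\Omega_{n}$, and the recurrence $\Omega_{m+1}=\Omega_m+a_{m+1}\Delta_m$ of (\ref{Deltarecur}) taken at $m=n$ then forces $2\Omega_{n}=-a_{n+1}\Delta_{n}$, so in particular $\Delta_{n}\mid 2\Omega_{n}$. Substituting $\Omega_{n}=-\frac{1}{2}a_{n+1}\Delta_{n}$ into (\ref{dnorm}), which at $m=n$ reads $\Omega_{n+1}^{2}-\Delta_{n}\Delta_{n+1}=N$, i.e.\ $\Omega_{n}^{2}-\Delta_{n}\Delta_{n+1}=N$, and clearing the $4$, I obtain
$$ 4N=\Delta_{n}\left(a_{n+1}^{2}\Delta_{n}-4\Delta_{n+1}\right). $$
Thus $\Delta_{n}\mid 4N$, which is exactly the statement that position $n$ of $\mathbf\Delta$ holds a divisor of $4N$. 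Since $N$ is square-free, $4\nmid N$, and a one-line check on the displayed identity (distinguishing $\Delta_n$ odd, $\Delta_n\equiv 2\pmod{4}$, and ruling out $4\mid\Delta_n$) shows that $\Delta_{n}$, or $\Delta_{n}/2$, is an honest divisor of $N$.

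To tie this divisor to $\mathfrak c_{\tau-1}$ itself, and thereby make ``$\mathfrak c_{\tau-1}$ splits $4N$'' precise in the sense of the definition opening Section~\ref{subsect2}, I would use the compact form of (\ref{fact4})--(\ref{factfund}), namely $\mathfrak c_{\tau-m-2}=(-1)^{m-1}\mathfrak c_{\tau-1}\sigma(\mathfrak c_m)$, at $m=n$, where $\tau-m-2=m$; this yields $\mathfrak c_{\tau-1}=(-1)^{\tau/2}\mathfrak c_{n}/\sigma(\mathfrak c_{n})=(-1)^{\tau/2}\mathfrak c_{n}^{2}/\Delta_{n}$. Expanding $\mathfrak c_n^2=(A_n^2+NB_n^2)+2A_nB_n\sqrt N$ and separating rational and irrational parts gives $A_{\tau-1}=(-1)^{\tau/2}(A_n^2+NB_n^2)/\Delta_n$, so $A_{\tau-1}-1$ and $A_{\tau-1}+1$ equal, in one order or the other and up to an overall sign, $2A_n^2/\Delta_n$ and $2NB_n^2/\Delta_n$. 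Because $\gcd(A_n,B_n)=1$ by (\ref{fund}), one has $\gcd(\Delta_n,B_n)=\gcd(A_n^2,B_n)=1$, whence $\Delta_n\mid 2N$; this exhibits the two halves of the factorization $(A_{\tau-1}-1)(A_{\tau-1}+1)=NB_{\tau-1}^2$ as both governed by $\Delta_n$ and recovers the divisor of $N$ found above. Finally, Theorem~\ref{per2} says that for $\tau$ even the sequence $\mathbf\Delta$ is periodic with period exactly $\tau$, so $\Delta_{n+j\tau}=\Delta_{n}$ for every $j\ge 0$, which places the divisor at the positions $\frac{\tau-2}{2}+j\tau$ asserted.

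The step I expect to be the real obstacle is not the algebra but the bookkeeping needed to match everything to the definition of ``splits'': keeping the sign $(-1)^{\tau/2}$ under control, arguing that passing through $4N$ rather than $N$ is precisely what absorbs the case $2\mid\gcd(A_{\tau-1}-1,A_{\tau-1}+1)$, and verifying the small hygiene points that $\Delta_n\neq 0$ (because $N$ is not a perfect square, so dividing by $\Delta_n$ is legitimate) and that $0\le n\le\tau-2$ so that Theorems~\ref{per2} and \ref{per3} genuinely apply at $m=n$. Everything else reduces to the two elementary manipulations above.
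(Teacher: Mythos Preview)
Your proof is correct and reaches the conclusion by a genuinely different route for the core divisibility $\Delta_n\mid 4N$ (with $n=\tau_0=\frac{\tau-2}{2}$). The paper argues through the involutory matrix $M_{\tau-1}$ of (\ref{keymatrix}): it writes down the eigenvectors $\mathbf X^{(h)}=\bigl[(A_{\tau-1}-(-1)^h)/d,\ B_{\tau-1}/d\bigr]^T$, observes from (\ref{fact4}) that $[A_{\tau_0},B_{\tau_0}]^T$ is itself an eigenvector, matches the two to obtain (\ref{keycond}), and then computes $\Delta_{\tau_0}$ directly as in (\ref{keyfact}) to conclude $\Delta_{\tau_0}\mid 4N$. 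You bypass the matrix entirely: from Theorem~\ref{per3} you extract $\Omega_{n+1}=-\Omega_n$, combine with the second line of (\ref{Deltarecur}) to get $2\Omega_n=-a_{n+1}\Delta_n$, and substitute into (\ref{dnorm}) to factor $4N=\Delta_n\bigl(a_{n+1}^2\Delta_n-4\Delta_{n+1}\bigr)$. This is more elementary and makes transparent that the middle form $\mathbf f_n$ is ambiguous (its middle coefficient $2\Omega_n$ is a multiple of its leading one $\Delta_n$), which is the structural reason a divisor must sit there. For the ``splits $4N$'' clause the two arguments converge: your identity $\mathfrak c_{\tau-1}=(-1)^{\tau/2}\mathfrak c_n^{2}/\Delta_n$ is exactly the eigenvector relation $M_{\tau-1}[A_n,B_n]^T=(-1)^{n-1}[A_n,B_n]^T$ rewritten in $\mathbb Q(\sqrt N)$, and your formulas $A_{\tau-1}\pm 1\in\{\pm 2A_n^2/\Delta_n,\ \pm 2NB_n^2/\Delta_n\}$ are equivalent to (\ref{keycond})--(\ref{keyfact}). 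What the paper's presentation buys is the explicit parameter $d=\gcd(A_{\tau-1}\mp 1,B_{\tau-1})$, tying $A_{\tau_0},B_{\tau_0}$ directly to $A_{\tau-1},B_{\tau-1}$; what yours buys is a one-line derivation of the divisibility (in fact the sharper $\Delta_n\mid 2N$ from your second argument) that needs only Theorems~\ref{per2}--\ref{per3} and the standard recurrences, with no linear algebra.
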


\noindent
\begin{proof}
It is sufficient to consider $j=0$, due to the periodicity of $\mathbf \Delta$.
Since $\tau$ is even, $M_{\tau-1} $ is involutory and has eigenvalues $\pm 1$ with
 corresponding eigenvectors 
$$ \mathbf X^{(h)}= \left[ \frac{A_{\tau-1}-(-1)^{h}}{d}, \frac{B_{\tau-1}}{d} \right]^T  ~~~~\mbox{with}~~~~ 
 d=\gcd \{A_{\tau-1}-(-1)^{h}, B_{\tau-1}\} ~~~~h=0,1 ~~. $$ 
Considering equation (\ref{fact4}) written as
$$  \left[ \begin{array}{c}  A_{\tau-j-2} \\ B_{\tau-j-2} \end{array} \right] =
(-1)^{j-1} M_{\tau-1} 
   \left[ \begin{array}{c}  A_{j} \\ B_{j} \end{array} \right] ~~, $$
we see that $\mathbf Y^{(j)}=[ A_{j} , B_{j}  ]^T$ is an eigenvector of $M_{\tau-1}$, of
 eigenvalue $(-1)^{j-1}$ if and only if
 $j$ satisfies the condition $\tau-j-2=j$, that is $j=\frac{\tau-2}{2}=\tau_0$.
From the comparison of $\mathbf X^{(h)}$ and  $\mathbf Y^{(\tau_0)}$, we have
\begin{equation}
   \label{keycond}
A_{\tau_0} = \frac{A_{\tau-1}-(-1)^{\tau_0-1}}{d} \hspace{10mm}
  B_{\tau_0} = \frac{B_{\tau-1}}{d} ~~,
\end{equation}
  where the equalities are fully motivated because $\gcd \{A_{\tau_0},~ B_{\tau_0}\}=1$. 
Direct computation yields
\begin{equation}
   \label{keyfact}
 \Delta_{\tau_0} = \frac{(A_{\tau-1}-(-1)^{\tau_0-1})^2- N  B_{\tau-1}^2}{d^2} = 
    2\frac{(-1)^{\tau_0-1}A_{\tau-1}+1}{d^2} ~~,
 \end{equation} 
which can be written as $A_{\tau_0}^2-N B_{\tau_0}^2 = 2(-1)^{\tau_0-1} \frac{A_{\tau_0}}{d}$; dividing this equality  by 
$ 2 \frac{A_{\tau_0}}{d}$ we have
$$   \frac{dA_{\tau_0}}{2} -N  \frac{1}{\frac{2A_{\tau_0}}{d}}  B_{\tau_0}^2 = (-1)^{\tau_0-1} ~~.$$
Noting that $\gcd\{A_{\tau_0},~B_{\tau_0} \}=1$, it follows that $\frac{2A_{\tau_0}}{d}$ is certainly a divisor of $2N$, i.e. $ \Delta_{\tau_0} |2N$.
\end{proof}

\begin{example}
Consider $N=3 \cdot 5 \cdot 7 \cdot 11  \cdot 19 =21945$ ,
the period of the continued fraction of $\sqrt{21945}$ is $10$, and is fully shown in the
 following table for the sequences $\mathbf \Delta$ and $\mathbf \Omega$

\begin{center}
  \begin{tabular}{|c|c|r|} \hline
    $j$  &   $\Delta_j$   &  $\Omega_j$   \\ \hline
      $-1$   &   1   &         \\
         0     &  -41  &   148  \\
         1     &   64   &   -139   \\
         2     &  -129 &   117   \\
         3     &  16  &    -141   \\    \hline
         4     &   -21   &   147   \\   \hline
         5     &  16 &   -147   \\
         6     &  -129   &   141  \\
         7     &   64   &   -117   \\
         8    &  -41 &   139  \\   \hline
        9     &  1 &   -148  \\
         10  &  -41   &   148   \\  \hline
   \end{tabular}
\end{center}
In position $j=\frac{\tau-2 }{2}=4$ we find $21$, a factor of $N$, as expected. 
The same factor $21$ can be found by considering the fundamental unit
$  \mathfrak c_9 =  3004586089+20282284 \sqrt{21945}$, 
in fact we have $3004586089-1 =2^3\cdot (3\cdot 7)\cdot 4229^2$, and the second factor 
 $5\cdot 11\cdot 19$ may be obtained from $3004586089+1=2\cdot (5\cdot 11^3\cdot 19) \cdot109^2$.
\end{example}

\noindent
In principle, in many cases the above Theorem \ref{locfactor} yields a factor
of $N$; however there are examples in which only the factor $2$ appears.

\begin{example}
    \label{ex3}
Let  $N = 8527 \times 8537 = 72794999$ be a composite number.
The period of $\sqrt N$ is $\tau = 3864$ and in position $1931$
we do not find a factor of $N$ but $\Delta_{1931}=2$ which is a factor of $2N$.
\end{example}

\noindent
It would be interesting to find a general condition that can discriminate the
various situations, i.e. whether a factor of $N$ is found or not. 
This objective can be achieved almost in full when $N=pq$ is the product of two primes,
a case that cleverly shows the difficulty of the whole problem.

\subsection{Factoring $N=pq$}
When $N=pq$ is the product of two distinct primes, 
  the analysis of section \ref{subsect2} may be further pursued, leading to the following
 remarkable property: 
\begin{proposition}
If $p \equiv q \equiv 3 \bmod 4$, the fundamental unit $\epsilon_0$ (or the cube $\epsilon_0^3$)
  splits $N=pq$,  then $\Delta_{\frac{\tau - 2}{2}}$ is  equal to $\legendre{q}{p} p$, with $p <q$.
\end{proposition}

\noindent
This proposition is given without the proof, which uses units and splitting of primes in quadratic
number fields (see \cite{elia,cohn,hua}); further, the complete classification in
    terms of residues of $p$ and $q$ modulo $8$, proved in \cite{elia},. 
    is reported in Table \ref{tab1} for easy reference.

\section{Factorization}
Gauss recognized that the factoring problem was  important, although very difficult,
\begin{quotation}
\noindent
{\em $\ldots$  Problema, numeros primos a compositis dignoscendi, hosque in factores
   suos primos resolvendi, ad gravissima ac utilissima totius arithmeticae pertinere, et 
   geometrarum tum veterum tum recentiorum industriam ac sagacitatem occupavisse, 
   tam notum est, ut de hac re copiose loqui superfluum foret.  $\ldots$ }
\hfill {\scriptsize \sc C. F. Gauss [{\em Disquisitiones Arithmeticae} Art. 329]}
\end{quotation}
and, in spite of much effort, various different approaches, and the problem's increased importance due to the large
 number of cryptographic applications, no satisfactorily factoring method has yet been found. 

\noindent
Many factorizations make use of the regular continued fraction expansion of 
 $\sqrt N$, combined with the idea of using quadratic forms
 \cite{gauss,riesel}. The infrastructure method, proposed by Shanks \cite{shanks}, considers  the subset 
$\mathbf \Psi =\{\mathbf f_m(x,y) \}_{ 1 \leq m \leq \tau-1}$ in the periodic sequence  
  $\Upsilon =\left\{ \mathbf f_m(x,y)  \right\}_{m \geq 1}^\infty$ of reduced principal quadratic forms.
It should be remarked that the forms 
$\mathbf f_m(x,y)=\Delta_mx^2+2\Omega_m x y + \Delta_{m-1}y^2$ in $\Upsilon$ 
are reduced following a different convention from that commonly adopted  \cite{buell}.

\begin{definition}
A real quadratic form $\mathbf f(x,y)=ax^2 +2bx y +cy^2$ of discriminant $4N$ is said to be reduced
 if, defining $\kappa=\min \{ |a|, |c| \}$,  $b$ is the sole integer such that
 $\sqrt N- |b|<\kappa <\sqrt N+|b| $, with the sign of $b$ chosen opposite to the sign of $a$. 
\end{definition} 

\begin{definition}
The distance between $\mathbf f_{m+1}(x,y)$ and $\mathbf f_{m}(x,y)$ is defined to be
\begin{equation}
   \label{defdist} 
    d(\mathbf f_{m+1}, \mathbf f_{m}) =\frac{1}{2}  \ln \left( \frac{\sqrt{N}+(-1)^m\Omega_m }{\sqrt{N}-(-1)^m\Omega_m} \right)   ~~.  
\end{equation}
The distance between two quadratic forms $\mathbf f_{m}(x,y)$ and  $\mathbf f_n(x,y)$,
 with $m > n$, is defined to be the sum
\begin{equation}
   \label{defdist1}
    d(\mathbf f_{m}, \mathbf f_{n}) = \sum_{j=n}^{m-1}  d(\mathbf f_{j+1}, \mathbf f_{j})  ~~. 
\end{equation}
\end{definition}

\noindent
Taking the above definitions, Shanks showed
 that, by the Gauss composition law of quadratic forms with the same determinant, followed by reduction,
 the set  $\mathbf \Psi$ equipped with the distance 
 $d(\mathbf f_{m+1}, \mathbf f_{m})$ modulo $R=\ln \mathfrak c_{\tau-1}$
  resembles a cyclic group, with $\mathbf f_{\tau-1}(x,y)$ playing the role of identity.
Composition followed by reduction affords big steps (giant steps) within $\mathbf \Psi$, thus
two operators were further defined \cite[p.259]{cohen} to allow small steps (baby steps), precisely
\begin{enumerate}
   \item One-step forward:
The operator $\rho^+$ that transforms one reduced quadratic form into the next in the sequence 
$\mathbf \Upsilon$, is defined as 
$$  \rho^+([a,2b,c]) = [\frac{b_1^2-N}{a},2b_1, a]  ~~, $$
where $b_1$ is $2b_1= [2b \bmod (2a)] +2ka$ with $k$ chosen in such a way
 that $-|a| < b_1 < |a|$.
     \item One-step backward:
The operator $\rho^-$ that transforms a reduced quadratic form into the immediately preceding quadratic form
 in the sequence $\mathbf \Upsilon$ is defined as 
$$  \rho^-([a,2b,c]) = [c,2b_1, \frac{b_1^2-N}{c}]  ~~, $$
where $b_1$ is $2b_1=[ -2b \bmod (2c)] +2kc$ with $k$ chosen  such that $-|c| < b_1 < |c|$.
\end{enumerate}

\noindent
The infrastructure machinery was used to compute the fundamental unit, the regulator, and 
the class number \cite{cohen}, with complexity smaller than $O(\sqrt N)$, although not of polynomial complexity in $N$. 
From a different perspective, by Theorem \ref{locfactor}, in many cases a factor of $N$ is exactly
 positioned in the middle of a period of the sequence $\mathbf \Delta$. Therefore, instead of trying
  to find special quadratic forms randomly located in $\mathbf  \Psi$ (the principal genus), 
  or some ambigue form in some non-principal
 genus, we may try to localize the position of some factor of $N$ within a period whose length $\tau$
 is unknown.
Then, it is shown that, by extending the infrastructure machinery to the whole sequence $\Upsilon$, 
some factors of $N$ can be computed with a complexity substantially bounded by the complexity
 required to evaluate an integral of Dirichlet's at a given accuracy: the more precise the evaluation
 of the integral, the less complex the factorization; at the limit, it is of polynomial complexity;  
clearly, to be more accurate in the integral evaluation, greater complexity is required. 
To pursue this idea, we briefly review and adapt the previous definitions of the infrastructure
 components to the new task. 
Let us recall that the quadratic forms $\mathbf f_{m}(x,y)$ are primitive,
  i.e.  $\gcd \{\Delta_m, 2\Omega_m , \Delta_{m-1} \}=1$, and at least one between $|\Delta_m|$
 and $|\Delta_{m-1}|$ is less than $\sqrt N$ and $0 < |\Omega_m| < \sqrt N$. Further, since 
$\mathfrak c_{\tau-1}$ is either equal to the positive fundamental unit of 
$\mathbb F=\mathbb Q(\sqrt N)$ or equal to its cube,  
 the regulator of $\mathfrak O_{\mathbb F}$ is either $R_{\mathbb F} = \ln\mathfrak c_{\tau-1}$,
 or $R_{\mathbb F} = \frac{1}{3} \ln\mathfrak c_{\tau-1}$.
The following observations are instrumental to motivate the procedure:
\begin{enumerate}    
   \item The sign of $\Delta_{m-1}$ is the same as that of $\Omega_m$, which  is opposite to that of
  $\Delta_m$, thus in the sequence $\mathbf{\Upsilon}$ the two triplets of signs 
  $(-,+,+)$ and $(+,-,-)$ alternate. 
  \item The distance of $\mathbf f_{m}(x,y)$ from the beginning of $\mathbf \Upsilon$ is defined by
    referring to a properly selected hypothetical quadratic form, i.e. 
$ \mathbf f_{0}(x,y)= \mathbf f_{\tau}(x,y)= \mathbf f_{0}(x,y)= \Delta_0 x^2-2 \sqrt{N-\Delta_0} x y+y^2$,
  which is located before $ \mathbf f_{1}(x,y)$, that is
     $d(\mathbf f_{m}, \mathbf f_{0})$ is given by (\ref{defdist1}) if $m < \tau$, and by
      $d(\mathbf f_{m}, \mathbf f_{0}) =d(\mathbf f_{m \bmod \tau}, \mathbf f_{0}) +kR_{\mathbb F}$ if $k \tau \leq m < (k+1) \tau$. 
  \item  Let "$\bullet$" denote the form composition $\mathbf f_{m}(x,y) \bullet \mathbf f_{n}(x,y) $ in $\mathbf \Upsilon$, 
  that is the Gauss composition \cite{cohen} of $\mathbf f_{m}(x,y)$ and $\mathbf f_{n}(x,y)$
     followed by a reduction performed with the minimum number of steps, ending with a reduced form whose
   triplet of signs is $(-,+,+)$ if $m$ and $n$ have the same parity, and $(+,-,-)$ otherwise.
This distance defined by (\ref{defdist}) holds in $\mathbf \Upsilon$ with good approximation, and is
  compatible with the "$\bullet$" operation, that is we have
$$   \mathbf f_{\ell(m,n)}(x,y) =  \mathbf f_{m}(x,y) \bullet \mathbf f_{n}(x,y) \Rightarrow 
     d(\mathbf f_{\ell(m,n)}, \mathbf f_{0}) \approx  d(\mathbf f_{m}, \mathbf f_{0})+ d(\mathbf f_{n}, \mathbf f_{0}) ~~.$$  
It is remarked that the error affecting this distance estimation is of order $O(\ln N)$ 
 as shown by Schoof in \cite{schoof0}. 
   \item Shanks \cite{shanks} observed that, within the first period, the composition law "$\bullet$"  
   induces a structure similar to a cyclic group for the addition of distances modulo the "regulator".
\item Between the elements of $\mathbf{\Upsilon}$ the distance is nearly maintained by the giant steps,
 and is rigorously maintained by the baby steps.
\end{enumerate}

\begin{theorem}
   \label{mainreg}
The distance  $d(\mathbf f_{\tau}, \mathbf f_{0} )$ is exactly equal to 
 $\ln \mathfrak c_{\tau-1}$, i.e. this distance $d(\mathbf f_{\tau}, \mathbf f_{0} )$ is either the regulator
   $R_{\mathbb F}$ or $3R_{\mathbb F}$. The distance $d(\mathbf f_{\frac{\tau}{2}}, \mathbf f_{0} )$ is
exactly equal to  $\frac{1}{2}d(\mathbf f_{\tau}, \mathbf f_{0} )$.
\end{theorem}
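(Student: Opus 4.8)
The plan is to unwind the definition of the distance, pass to exponentials, and reduce the statement to a single telescoping product identity. By (\ref{defdist1}) and (\ref{defdist}),
$$ d(\mathbf f_{\tau},\mathbf f_{0})=\sum_{j=0}^{\tau-1}d(\mathbf f_{j+1},\mathbf f_{j})=\frac12\sum_{j=0}^{\tau-1}\ln\frac{\sqrt N+(-1)^{j}\Omega_{j}}{\sqrt N-(-1)^{j}\Omega_{j}}, $$
where $\Omega_{0}:=A_{0}A_{-1}-NB_{0}B_{-1}=a_{0}$ is the value obtained by extending Definition \ref{seqqf} to $m=0$ through $A_{-1}=1,\ B_{-1}=0$ (the coefficient attached to the auxiliary form $\mathbf f_{0}$). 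Each factor $\frac{\sqrt N+(-1)^{j}\Omega_{j}}{\sqrt N-(-1)^{j}\Omega_{j}}$ is strictly positive, since $|\Omega_{j}|<\sqrt N$ for $1\le j\le\tau-1$ and $|\Omega_{0}|=a_{0}<\sqrt N$; hence the logarithms are real, and since $\mathfrak c_{\tau-1}=A_{\tau-1}+B_{\tau-1}\sqrt N>0$ the theorem is equivalent to
$$ \prod_{j=0}^{\tau-1}\frac{\sqrt N+(-1)^{j}\Omega_{j}}{\sqrt N-(-1)^{j}\Omega_{j}}=\mathfrak c_{\tau-1}^{\,2}. $$

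First I would rewrite the index range. Proposition \ref{prop1} with $m=0,\ k=1$ gives $\mathfrak c_{\tau}=\mathfrak c_{0}\mathfrak c_{\tau-1}$, so from the identity $\Omega_{m}=\frac12\big(\mathfrak c_{m}\sigma(\mathfrak c_{m-1})+\sigma(\mathfrak c_{m})\mathfrak c_{m-1}\big)$ used in the proof of Theorem \ref{per3} one gets $\Omega_{\tau}=\frac12\big(\mathfrak c_{0}+\sigma(\mathfrak c_{0})\big)\mathfrak c_{\tau-1}\sigma(\mathfrak c_{\tau-1})=a_{0}(-1)^{\tau}$ (equivalently, Lemma \ref{sym3a} when $\tau$ is even). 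Therefore $(-1)^{\tau}\Omega_{\tau}=a_{0}=(-1)^{0}\Omega_{0}$, so the $j=0$ summand coincides with the would‑be $j=\tau$ summand, and the range $0\le j\le\tau-1$ may be replaced by $1\le m\le\tau$. It remains to compute $\prod_{m=1}^{\tau}\frac{\sqrt N+(-1)^{m}\Omega_{m}}{\sqrt N-(-1)^{m}\Omega_{m}}$.

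For this I would invoke the identity recorded inside the proof of Lemma \ref{sym3}, namely $\sqrt N+(-1)^{m}\Omega_{m}=(-1)^{m}\Delta_{m}\,\sigma(\mathfrak c_{m-1})/\sigma(\mathfrak c_{m})$ for $1\le m\le\tau$; applying the conjugation $\sigma$ gives $\sqrt N-(-1)^{m}\Omega_{m}=(-1)^{m+1}\Delta_{m}\,\mathfrak c_{m-1}/\mathfrak c_{m}$. Dividing, the $\Delta_{m}$ cancel and the $m$-th factor becomes $-\,\mathfrak c_{m}\sigma(\mathfrak c_{m-1})/\big(\mathfrak c_{m-1}\sigma(\mathfrak c_{m})\big)$; the product over $1\le m\le\tau$ telescopes to
$$ \prod_{m=1}^{\tau}\frac{\sqrt N+(-1)^{m}\Omega_{m}}{\sqrt N-(-1)^{m}\Omega_{m}}=(-1)^{\tau}\frac{\mathfrak c_{\tau}\,\sigma(\mathfrak c_{0})}{\mathfrak c_{0}\,\sigma(\mathfrak c_{\tau})}=(-1)^{\tau}\frac{\mathfrak c_{\tau-1}}{\sigma(\mathfrak c_{\tau-1})}, $$
using $\mathfrak c_{\tau}=\mathfrak c_{0}\mathfrak c_{\tau-1}$ once more. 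Since $\mathfrak c_{\tau-1}\sigma(\mathfrak c_{\tau-1})=A_{\tau-1}^{2}-NB_{\tau-1}^{2}=(-1)^{\tau}$ (the norm of $\mathfrak c_{\tau-1}$, cf. the proof of Theorem \ref{per2}), the last quantity is $(-1)^{\tau}(-1)^{-\tau}\mathfrak c_{\tau-1}^{2}=\mathfrak c_{\tau-1}^{2}$. Substituting back, $d(\mathbf f_{\tau},\mathbf f_{0})=\frac12\ln\mathfrak c_{\tau-1}^{2}=\ln\mathfrak c_{\tau-1}$, the positive root being the correct one because $\mathfrak c_{\tau-1}>0$; and by Remark \ref{remark1}, $\mathfrak c_{\tau-1}$ is either the fundamental positive unit $\epsilon_{0}$ or its cube, so $\ln\mathfrak c_{\tau-1}$ equals $R_{\mathbb F}=\ln\epsilon_{0}$ or $3R_{\mathbb F}$.

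The only delicate point, and the sole place an error is easy to slip in, is the sign bookkeeping: the $\tau$ factors each contribute a $-1$, the norm $\mathfrak c_{\tau-1}\sigma(\mathfrak c_{\tau-1})$ contributes $(-1)^{\tau}$, and the endpoint matching $(-1)^{\tau}\Omega_{\tau}=(-1)^{0}\Omega_{0}=a_{0}$ itself conceals a further $(-1)^{\tau}$ since $\Omega_{\tau}=(-1)^{\tau}a_{0}$. All of these must be tracked to confirm that the product is exactly $\mathfrak c_{\tau-1}^{2}$ rather than $\mathfrak c_{\tau-1}$, so that the factor $\frac12$ in (\ref{defdist}) yields precisely $\ln\mathfrak c_{\tau-1}$. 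Beyond that, the argument is just the telescoping identity of Lemma \ref{sym3} combined with Proposition \ref{prop1}, plus the elementary positivity observations that legitimize the logarithms.
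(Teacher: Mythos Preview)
Your proof is correct and follows essentially the same route as the paper: both unwind the distance into the logarithm of a product and evaluate that product via the telescoping identity $\dfrac{\sqrt N+(-1)^{m}\Omega_{m}}{(-1)^{m}\Delta_{m}}=\dfrac{\sigma(\mathfrak c_{m-1})}{\sigma(\mathfrak c_{m})}$ from Lemma~\ref{sym3}. The only cosmetic difference is that the paper first rationalizes each factor to $(\sqrt N+(-1)^{j}\Omega_{j})^{2}/\Delta_{j}^{2}$ and then quotes~(\ref{unitnorm}), whereas you apply the identity and its conjugate directly to numerator and denominator and carry out the index shift $0\le j\le\tau-1\to 1\le m\le\tau$ explicitly.
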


\begin{proof}
The distance between $\mathbf f_{\tau}$ and $\mathbf f_{0}$ is the summation
$$    d(\mathbf f_{\tau}, \mathbf f_{0}) = \sum_{j=0}^{\tau-1}  d(\mathbf f_{j+1}, \mathbf f_{j}) =
    \sum_{j=0}^{\tau-1}  \frac{1}{2}   \ln \left(  \sum_{j=0}^{\tau-1}  \frac{\sqrt{N}+(-1)^{j}\Omega_j }{\sqrt{N}-(-1)^j\Omega_j} \right) =
  \frac{1}{2} \ln \left(  \prod_{j=0}^{\tau-1}  \frac{\sqrt{N}+(-1)^{j}\Omega_j }{\sqrt{N}-(-1)^j\Omega_j} \right) 
  ~~. $$
Recalling that $N-\Omega_j^2=-\Delta_j \Delta_{j-1} >0$, and taking into account the periodicity of the sequence 
$\mathbf \Delta$, the last expression can be written with rational denominator as
$$   \frac{1}{2} \ln \left(  \prod_{j=0}^{\tau-1}  \frac{(\sqrt{N}+(-1)^j\Omega_j )^2}{-\Delta_j \Delta_{j-1}} \right)   
=  \frac{1}{2}  \ln \left(  \prod_{j=0}^{\tau-1}  \frac{(\sqrt{N}+(-1)^j\Omega_j )^2}{\Delta_j^2} \right)  = 
  \ln \left(  \prod_{j=0}^{\tau-1}  \frac{\sqrt{N}+(-1)^j\Omega_j}{(-1)^{j-1}\Delta_j} \right)      ~~.$$
The conclusion follows from Lemma \ref{sym3}, 
showing that the product 
$\prod_{j=0}^{\tau-1}  \frac{\sqrt{N}+(-1)^j \Omega_j}{(-1)^{j-1}\Delta_j}$, which has field norm one
 and is an element
 of the order $\mathfrak O_{\mathbb F}$, is actually the unit $\mathfrak c_{\tau-1}$ by equation (\ref{unitnorm}). 
 The connection between  $\ln \mathfrak c_{\tau-1}$ and the regulator is motivated by Remark \ref{remark1}. \\
The equality  $d(\mathbf f_{\frac{\tau}{2}}, \mathbf f_{0} ) = \frac{1}{2}d(\mathbf f_{\tau}, \mathbf f_{0} )$
 is an immediate consequence of the symmetry of the sequence $\mathbf f_{m}(x,y)$ within a period.
\end{proof}

\noindent
Since Theorem \ref{locfactor} guarantees that,  when $\tau$ is even, a factor of $N$ is located in the positions 
$\frac{\tau-2}{2}+k\tau$ of the sequence $\Upsilon$, Shanks' method
 allows us to find such a factor, if  $\ln(\mathfrak c_{\tau-1})$, or an odd multiple of it, is 
exactly known. Now, a formula of Dirichlet's gives the product 
\begin{equation}
   \label{dirichlet}
  h_{\mathbb F} R_{\mathbb F} =  \frac{\sqrt{D}}{2}  L(1, \chi) = 
      - \sum_{n=1}^{\lfloor \frac{D-1}{2} \rfloor} \jacobi{D}{n}  \ln\left( \sin \frac{n \pi}{D} \right)
\end{equation} 
where  $R_{\mathbb F}$ is the regulator, $ L(1, \chi)$ is a Dedekind $L$-function, $D=N$ if $N\equiv 1 \bmod 4$ or 
$D=4N$ otherwise, and character $\chi$ is the Jacobi symbol in this case.
If the product $h_{\mathbb F} R_{\mathbb F}$ is known exactly (computed), for example using equation (\ref{dirichlet}), 
the distance from the beginning of the sequence where the quadratic form can be found
 $[1, 2\Omega_{\tau-1}, \Delta_{\tau-2}]$ is known.
Since this distance is an integer multiple of the regulator, and our target
 is to find a quadratic form that is located in the middle of some period, then
\begin{enumerate}
    \item if $h_{\mathbb F}$ is odd, a factor of $N$ is found in the position at distance 
    $\frac{h_{\mathbb F} R_{\mathbb F}}{2}$, or $3\frac{h_{\mathbb F} R_{\mathbb F}}{2}$, from the beginning;
    \item If $h_{\mathbb F}$ is even, in a position at distance $\frac{h_{\mathbb F} R_{\mathbb F}}{2}$, or 
   $3\frac{h_{\mathbb F} R_{\mathbb F}}{2}$ the quadratic form $[1, 2\Omega_{\tau-1}, \Delta_{\tau-2}]$
     is found, (which reveals a posteriori that $h_{\mathbb F}$ is even);
      in this case, the procedure can be repeated with target the position at distance
   $\frac{h_{\mathbb F} R_{\mathbb F}}{4}$, or $3\frac{h_{\mathbb F} R_{\mathbb F}}{4}$; again, either a factor of $N$
    is found or $h_{\mathbb F}$ is found to be a multiple of $4$. 
   Clearly the process can be iterated $\ell$ times until 
 $\frac{h_{\mathbb F} R_{\mathbb F}}{2^\ell}$ is an odd multiple of $R_{\mathbb F}$, and a factor of $N$ is found.
\end{enumerate}

\noindent
When the factor $m_1$ of $N$ is found, the second factor is   
$m_2=\frac{N}{m_1}$, thus the procedure can be iterated to find all factors of $N$. 
Mimicking Shanks' infrastructure, giant steps are performed to get close to
forms at distance  $\frac{k R_{\mathbb F}}{2}$, or  $3\frac{k R_{\mathbb F}}{2}$,  for some $1 \leq k \leq h_{\mathbb F}$, then
baby steps are performed to get the exact position.

\section{Conclusions}
It has been shown that the complexity of factoring a composite number $4N$ is upper bounded
 by the complexity of evaluating, at a certain degree of accuracy, the product 
$h_{\mathbb F} R_{\mathbb F}$, as defined by Dirichlet using the $~L(1,\chi_N)$ function, 
and also that is not necessary to know $h_{\mathbb F}$ and  $R_{\mathbb F}$ separately.
The more precise the evaluation of the product $h_{\mathbb F} R_{\mathbb F}$, the less complex the
factoring $2N$; if we are lucky, the complexity could be polynomial in $N$.
It is an open problem to find which is the best compromise between the approximate evaluation
 of  $h_{\mathbb F} R_{\mathbb F}$ and the computational complexity for obtaining such approximation. 
In this context, the following
expression, taken from  \cite[p.262]{cohen}, may be useful for efficiently evaluating the product 
$ h_{\mathbb F} R_{\mathbb F}$ as a function of $N$ 
\begin{equation}
   \label{eqhR}
  h_{\mathbb F} R_{\mathbb F} = \frac{1}{2} \sum_{x \geq 1}  \jacobi{N}{x} \left(\frac{\sqrt N}{x} \mbox{erfc} \left(x \sqrt{\frac{\pi}{N}} \right)+ E_1\left(\frac{\pi x^2}{N}\right)\right) ~~ ,
\end{equation} 
where the complementary error function $\mbox{erfc}(x)$, and the exponential integral function $E_1(x)$, 
 can be closely approximated   \cite[p.297-299]{hand}
$$ \mbox{erfc}(z) = \frac{2}{\sqrt \pi} \int_z^\infty e^{-t^2} dt = 1- \mbox{erf}(z) = 1- \frac{2}{\sqrt \pi} \sum_{n=0}^\infty \frac{(-1)^nz^{2n+1}}{n!(2n+1)}  $$
$$ E_1(z)=  \int_1^\infty \frac{e^{-t z}}{t} dt= -\gamma-ln(z) - \sum_{n=1}^\infty \frac{(-1)^n z^n}{n \cdot n!} ~~.  $$

\noindent
As a last observation, the  arguably, a fast (how fast is open) algorithm for factoring is achievable by combining results of Dirichlet, Shanks, 
and the above observations, which were suggested by Legendre's finding that continued fractions permit the representation of primes
 as the sum of two squares explicitly computed.  

\paragraph{Acknowledgement.} The very useful and constructive suggestions of the unknown referee, 
who pointed out several misprints, incompleteness, and provided the Example \ref{ex3} are gratefully acknowledged.  
 I also thank Karan Khathuria and Simran Tinani, PhD students at the University of Zurich, 
 for their careful reading of a preliminary version of the paper, and for pointing out several misprints, errors, and imprecisions.

\pagebreak

\vspace{2mm}
  \begin{table}
{\scriptsize              
  \begin{center}
  \begin{tabular}{|l|c|c|r|r|c|} \hline
   $p \bmod 8$ & $q \bmod 8$ &  Split?& $\legendre{p}{q}$  &$\Delta_{\tau/2-1}$& $T \bmod 4$  \\  \hline
         $3$   & $3$         &   Yes  & $\pm 1$ &   $- \legendre{p}{q}p$       &   $1+ \legendre{p}{q}$   \\     
         $3$   & $7$         &   Yes  & $\pm 1$ &   $- \legendre{p}{q}p$       &   $1+ \legendre{p}{q}$   \\     
         $7$   & $3$         &   Yes  & $\pm 1$ &   $- \legendre{p}{q}p$       &   $1+ \legendre{p}{q}$   \\     
         $7$   & $7$         &   Yes  & $\pm 1$ &   $- \legendre{p}{q}p$       &   $1+ \legendre{p}{q}$   \\ \hline  \hline
         $5$   & $3$         &   Yes  & $1$     &   $p$                        &   $0$   \\     
         $3$   & $5$         &   Yes  & $1$     &   $-p$                       &   $2$   \\     
         $5$   & $3$         &   Yes  & $-1$    &   $2p$                       &   $0$   \\     
         $3$   & $5$         &   Yes  & $-1$    &   $-2p$                      &   $2$   \\    \hline  
         $5$   & $7$         &   Yes  & $1$     &   $p$                        &   $0$   \\     
         $7$   & $5$         &   Yes  & $1$     &   $-p$                       &   $2$   \\     
         $5$   & $7$         &   Yes  & $-1$    &   $-2p$                      &   $2$   \\     
         $7$   & $5$         &   Yes  & $-1$    &   $2p$                       &   $0$   \\   \hline   \hline  
         $1$   & $3$         &   No   & $-1$    &   $-2$                       &   $2$   \\     
         $1$   & $3$         &   Yes  & $ 1$    &   $p$                        &AND~~ $0$   \\     
         $1$   & $3$         &  No/Yes& $ 1$    &   $-2,-2p$                   &   $2$   \\     
         $3$   & $1$         &   No   & $-1$    &     $-2$                         &   $2$   \\   
         $3$   & $1$         &   Yes  & $ 1$    &   $2p$                       &AND~~ $0$   \\    
         $3$   & $1$         &  No/Yes& $ 1$    &   $-2,-p$                    &   $2$   \\  \hline 
         $7$   & $1$         &   No   & $-1$    &   $2$                        &   $0$   \\     
         $7$   & $1$         &   No   & $ 1$    &   $2$                        &AND~~ $0$   \\     
         $7$   & $1$         &   Yes  & $ 1$    &   $-p,-2p$                   &   $2$   \\     
         $1$   & $7$         &   No   & $-1$    &   $2$                        &   $0$   \\     
         $1$   & $7$         &  No/Yes& $ 1$    &   $2,p,2p$                   &   $0$   \\  \hline \hline    
         $5$   & $1$         &   No   & $-1$    &                              &   $1,3$    \\     
         $5$   & $1$         &   No   & $ 1$    &                              & AND~~  $1,3$    \\     
         $5$   & $1$         &   Yes  & $ 1$    &   $-p$                       & AND~~   $2$    \\     
         $5$   & $1$         &   Yes  & $ 1$    &   $p$                        & AND~~   $0$    \\     
         $1$   & $5$         &   No   & $-1$    &                              &   $1,3$   \\ 
         $1$   & $5$         &   No   & $ 1$    &                              & AND~~  $1,3$   \\ 
         $1$   & $5$         &   Yes  & $ 1$    &   $-p$                       & AND~~   $2$   \\ 
         $1$   & $5$         &   Yes  & $ 1$    &   $p$                        & AND~~   $0$   \\  \hline
         $5$   & $5$         &   No   & $-1$    &                              &   $1,3$   \\     
         $5$   & $5$         &   No   & $ 1$    &                              & AND~~  $1,3$   \\     
         $5$   & $5$         &   Yes  & $ 1$    &   $-p$                       & AND~~   $2$   \\     
         $5$   & $5$         &   Yes  & $ 1$    &   $p$                        & AND~~   $0$   \\  \hline   
         $1$   & $1$         &   No   & $-1$    &                              &   $1,3$    \\
         $1$   & $1$         &   No   & $ 1$    &                              & AND~~  $1,3$    \\
         $1$   & $1$         &   Yes  & $ 1$    &   $-p$                       & AND~~   $2$    \\
         $1$   & $1$         &   Yes  & $ 1$    &   $p$                        & AND~~   $0$    \\ \hline
  \end{tabular} 
  \end{center}   
  \caption{$p<q$}
   \label{tab1}
}
  \end{table}

\end{document}